\pgfplotsset{compat=1.17}
\definecolor{DarkGray}{RGB}{90,90,90}
\date{\today}
\newenvironment{ackno}%
    {
    \paragraph{Acknowledgements:} 
    }
\newcommand{\pms}{P}
\newcommand{\qms}{\nu}
\newcommand{\ems}{P_n}
\newcommand{\pmean}{\bar \pms}
\newcommand{\emean}{{\bar\pms}_n}
\newcommand{\tmean}{{\hat\pms}_n}
\newcommand{\RR}{\mathbb{R}}
\newcommand{\NN}{\mathbb{N}}
\newcommand{\Indicator}[1]{\mathds{1}_{#1}}
\newcommand{\XC}{\mathcal{X}}
\newcommand{\WC}{\mathcal{W}_2}
\newcommand{\SSS}{\mathbb{S}}
\newcommand{\EE}{\mathbb{E}}
\newcommand{\grad}{\mathrm{grad}}
\newcommand{\NC}{\mathcal{N}}
\newcommand{\PC}{\mathcal{P}}
\newcommand{\PF}{\mathfrak{P}}
\newcommand{\mf}{\mathbb{M}}
\newcommand{\Id}{\mathrm{Id}}
\newcommand{\TV}{\mathrm{TV}}
\newcommand{\norm}[1]{\left\lVert#1\right\rVert}
\newcommand{\dif}{\mathrm{d}}
\newcommand{\supp}{\mathrm{supp}}
\newcommand{\cov}{\,\mbox{\normalshape cov}}
\newcommand{\Cov}{\operatorname{Cov}}
\newcommand{\sign}{\mathrm{sign}}
\newcommand{\iid}{\stackrel{\!\mathrm{i.i.d.}}{\sim}\!}
\newcommand{\coloneqq}{:=}
\newcommand{\eqqcolon}{=:}
\renewcommand{\phi}{\varphi}
\newcommand{\diam}[1]{\mathrm{diam}(#1)}
\newcommand{\konvD}{\xrightarrow{\;\;\mathcal{D}\;\;}}
\renewcommand*{\epsilon}{\varepsilon}
\DeclareMathOperator*{\argmin}{\mathrm{argmin}}
\theoremstyle{plain}
\newtheorem{theorem}{Theorem}[section]
\newtheorem{corollary}[theorem]{Corollary}
\newtheorem{lemma}[theorem]{Lemma}
\newtheorem{proposition}[theorem]{Proposition}
\newtheorem*{theorem*}{Theorem}
\theoremstyle{definition}
\newtheorem{definition}[theorem]{Definition}
\newtheorem{remark}[theorem]{Remark}
\newtheorem{conjecture}[theorem]{Conjecture}
\newtheorem{example}[theorem]{Example}
\newtheorem*{example*}{Example}
\def\mysequence#1{\expandafter\@mysequence\csname c@#1\endcsname}
\def\@mysequence#1{%
  \ifcase#1\or (NU)\or (B)\or (C)\or (D)\else\@ctrerr\fi}
\newtheorem{assumption}{Assumption}
\newcommand{\mylabel}[2]{#2\def\@currentlabel{#2}\label{#1}}
\numberwithin{equation}{section}
\newcommand{\footremember}[2]{
	\footnote{#2}
	\newcounter{#1}
	\setcounter{#1}{\value{footnote}}
}
\newcommand{\footrecall}[1]{
	\footnotemark[\value{#1}]
}
\newif\ifexclude
\begin{document}

\title{A Lower Bound for Estimating Fr\'echet Means}
\author{Shayan Hundrieser
  \hspace{-0.55em}\footremember{ims}{\scriptsize
    Institute for Mathematical
		Stochastics, University of G\"ottingen,
		Goldschmidtstra{\ss}e 7, 37077 G\"ottingen}\\
  \footnotesize{\href{mailto:s.hundrieser@math.uni-goettingen.de}{s.hundrieser@math.uni-goettingen.de}}
  \\[2ex]
	Benjamin Eltzner
  \hspace{-0.25em}\footnote{\scriptsize
     Max Planck Institute for Multidisciplinary Sciences,
     Am Fa{\ss}berg 11, 37077 G\"ottingen}
	\\
  \footnotesize{\href{mailto:benjamin.eltzner@mpinat.mpg.de}{benjamin.eltzner@mpinat.mpg.de}
  }  \\[2ex]
	Stephan F. Huckemann
  \hspace{-0.6em}\footrecall{ims}\\
  \footnotesize{\href{mailto:huckeman@math.uni-goettingen.de}{huckeman@math.uni-goettingen.de}
  }
}
 
\pagenumbering{arabic}
\maketitle

\begin{abstract}
    \noindent 
    Fr\'echet means, conceptually appealing, generalize the Euclidean expectation to general metric spaces. We explore how well Fr\'echet means can be estimated from independent and identically distributed samples and uncover a fundamental limitation: In the vicinity of a probability distribution $\pms$ with nonunique means, independent of sample size, it is not possible to uniformly estimate Fr\'echet means below a  precision determined by the diameter of the set of Fr\'echet means of $\pms$. Implications were previously identified for empirical plug-in estimators as part of the phenomenon \emph{finite sample smeariness}. Our findings thus confirm inevitable statistical challenges in the estimation of Fr\'echet means on metric spaces for which there exist distributions with nonunique means. Illustrating the relevance of our lower bound, examples of extrinsic, intrinsic, Procrustes, diffusion and Wasserstein means showcase either deteriorating constants or slow convergence rates of empirical Fr\'echet means for samples near the regime of nonunique means.

    \end{abstract}
    \vspace{0.5cm}
    \noindent \textit{Keywords}:  Intrinsic means, Extrinsic means, Procrustes means, Geometric statistics, Spherical statistics, Smeariness, Wasserstein barycenter

    \vspace{0.5cm}
    \noindent \textit{MSC 2020 subject classification}: primary 62F10, 62H12; secondary 60D05
    
    \hyphenation{Hausdorff}

\section{Introduction}

 With the dawn of information age, richness and complexity of data have soared to unprecedented levels, emphasizing the relevance of taking into  account the underlying geometry of data.
 Whereas earlier methods of data analysis implicitly assumed data to be arranged in (flat) Euclidean spaces, recent techniques acknowledge and incorporate (non-flat) non-Euclidean aspects of data.
   This paradigm shift has lead to the development of a plethora of statistical methods for non-Euclidean data  which have proven instrumental in a multitude of disciplines, including directional statistics \citep{mardia2000directional}, shape analysis \citep{B91,kendall1989survey, kendall2009shape, small2012statistical, dryden2016statistical}, medical imaging \citep{arsigny2006log,pennec2006riemannian}, and phylogenetic tree analysis \citep{billera2001geometry, huson2006application, garba2021information}, to name a few. \cite{marron2021object} give a recent account of theory and application.
   
   A key tool to summarize the central tendency of a dataset involves location descriptors. In classical settings where data is assumed to reside on $\RR^m$, their expected value or their sample average (i.e., the Euclidean sample mean) are the most commonly used measure of centrality. 
   Generalizing this concept to a metric spaces $(\XC, d)$, \cite{frechet1948elements} proposed to consider for a random variable $X\sim \pms$ minimizers of the expected squared distance,\begin{align*}
\mf(\pms) = \argmin_{x\in \XC} \EE_{X\sim \pms}[d^2(x,X)].
\end{align*}
The, possibly empty, collection of such minimizers is termed the \emph{Fr\'echet mean (set)}. If it comprises a single element $\mf(\pms) =\{\pmean\}$, we speak  of uniqueness of the Fr\'echet mean. Moreover, in case $(\XC, d)$ is a Riemannian manifold equipped with geodesic distance this notion is referred to as \emph{intrinsic mean}, whereas if $(\XC, d)$ is embedded as a submanifold of $\RR^m$ and equipped with the Euclidean distance, the respective mean is also known as \emph{extrinsic means}.

In applications, the measure $P$ is typically unknown and instead only independent and identically distributed (i.i.d.) samples $X_1, \dots, X_n \sim \pms$ are available. This gives rise to the empirical measure $\ems \coloneqq \frac{1}{n} \sum_{i = 1}^{n} \delta_{X_i}$ and the associated \emph{empirical Fr\'echet mean (set)}, \begin{align*}
    \mf(\ems) =   \argmin_{x\in \XC} \frac{1}{n} \sum_{i =1}^{n} d^2(x,X_i). 
\end{align*}
A first observation about these empirical estimators is that under mild assumptions on the metric space and $P$ they are almost surely consistent as the sample size $n$ tends to infinity  \citep{ziezold1977expected, bhattacharya2003large, huckemann2011intrinsic,schotz2022strong, evans2024limit}. Building up on this insight, various statistical questions surrounding the performance and uncertainty of these estimators for increasing sample size arise.
 
Most work devoted to this subject relies on a critical assumption: uniqueness of the Fr\'echet mean for the probability distribution $\pms$. This topic has been the subject of a long line of research. 
A first set of contributions operates under the assumption of $\XC$ being a Riemannian manifold equipped with a geodesic metric and under considerable concentration of measures  \citep{karcher1977riemannian, kendall1990probability, le2001locating, groisser2005convergence,afsari2011riemannian}. Broadly speaking, under such assumptions the minimization problem for the Fr\'echet mean closely mimics that for the Euclidean mean, which are, if existent, always unique. Another regime imposes that the curvature of the space is non-positive \citep{sturm2003probability, bhattacharya2003large, afsari2011riemannian}. More recently, uniqueness of means has also been confirmed without imposing concentration conditions on the measures, e.g., on the circle \citep{hotz2015intrinsic}. Moreover, for empirical measures based on i.i.d.\ observations from a continuous distribution on a manifold, a.s.\ uniqueness has been confirmed by \cite{arnaudon2014means}. Beyond these settings, non-uniqueness of Fr\'echet means rather appears to be an exceptional case since a slight perturbation of the measure $\pms$ suffices to ensure uniqueness (\Cref{lem:instability}). 

Upon imposing uniqueness of Fr\'echet means, several articles contributed to the statistical analysis of Fr\'echet sample means. While initial works focused on distributional limits \citep{bhattacharya2005large, huckemann2011inference, bhattacharya2012nonparametric, barden2013central, bhattacharya2017omnibus, eltzner2021stability}, more recently there has been growing interest in convergence rates \citep{schotz2019arbitrary,schotz2019convergence, ahidar2020convergence,le2022fast} and concentration properties \citep{romon2023convex,escande2023concentration}. Most of these results impose concentration assumptions, as alluded to above, of the measures in conjunction with curvature conditions on the behavior of the Fr\'echet function near its minimum, and confirm a parametric $\mathcal{O}(n^{-1/2})$ convergence rate. This perspective has expanded over time, with more articles exploring the statistical behavior of empirical Fr\'echet means for distributions which are supported on larger domains \citep{mckilliam2012direction, hotz2015intrinsic,pennec2019curvature, eltzer2019_smearyCLT, eltzner2022geometrical}. A notable finding in this context has been the phenomenon of \emph{smeariness}, which asserts slower than parametric rates of convergence for empirical Fr\'echet means. 

Although smeariness might be interpreted as an anomaly, afflicting merely boundary cases of distributions, its repercussions are nonetheless significant, impairing the efficacy of empirical plug-in estimators for intrinsic means across practical data sets \citep{tran2021smeariness}. Such deterioration in performance spurred \cite{hundrieser2020finite} to devise a diagnostic tool aimed at quantifying this loss in performance of  Fr\'echet sample means from non-smeary distributions in finite sample regimes. Upon assuming uniqueness of Fr\'echet population and sample means, $\mf(\pms) = \{\pmean\}$ and $\mf(\ems)= \{\emean\}$ for $\pmean, \emean \in \XC$, their analysis relies on the behavior of the \emph{variance modulation} \citep{pennec2019curvature} of the Fr\'echet sample mean  as a function of the sample size~$n$, 
\begin{align}\label{eq:varMod}
    \mathfrak{m}_n:=\frac{n \EE[d^2(\emean, \pmean)]}{\EE[d^2(X, \pmean)]}
\end{align}
Intuitively, if this quantity is large, the performance of the empirical mean is much worse compared to what would be expected from the Euclidean setting (where $\mathfrak{m}_n\equiv 1$) and mimics the finite sample behavior from a sample mean of a smeary distribution. For this reason, estimators with a large variance modulation are said to be affected by \emph{finite sample smeariness}. 
This effect is particularly dominant over empirical intrinsic means in data from circular or spherical distributions with (nearly) full support \citep{hundrieser2020finite,eltzner2021finite} and extends to broader Riemannian manifolds that exhibit positive curvature \citep{pennec2019curvature}. While these findings were mainly developed for intrinsic means on manifolds, alternative notions of means, such as extrinsic means or the recently proposed diffusion means \citep{hansen2021diffusion, eltzner2023diffusion}, seem less susceptible to non-standard rates akin to smeariness, but nevertheless can still exhibit a pronounced deterioration in the constant of the convergence rate.

\vspace{-0.2cm}
\paragraph*{Contribution of this work.}
This loss in statistical efficacy raises the question: Is it possible to construct estimators for Fr\'echet means which are not affected by finite sample effects of smeariness? The central finding of this paper (\Cref{thm:minimaxLower}) lies in uncovering a fundamental limitation: We show that uniformly in the vicinity of distributions with a non-unique Fr\'echet mean, independent of the sample size, the estimation error for Fr\'echet means cannot be improved below a certain precision. This inherent lower bound cannot be overcome by whatever procedure estimating Fr\'echet means, with striking consequences for testing based on Fr\'echet means (\Cref{rmk:testing}). Revisiting prior work and establishing novel insights into extrinsic and intrinsic means, Procrustes means, diffusion means, and Wasserstein barycenters, we exemplarily illustrate the impact of the lower bound manifesting in deteriorating constants or slow convergence rates nearby the regime of population measures with non-unique means.

\vspace{-0.2cm}
\paragraph*{Outline.}
The work is structured as follows. Section \ref{sec:main} contains the main results of this work. Section \ref{sec:Implications} details the impact on various notions of means where the more involved arguments are deferred to the appendix. Finally, Section \ref{scn:discussion} discusses methods to detect and amend for this nearby  non-unique means. 

\vspace{-0.2cm}
\paragraph*{Notation.}
Throughout this work, $(\XC,d)$ denotes a metric space with its power set $\PF(\XC)$. Given a nonempty set $A\subseteq \XC$  its diameter is $\diam{A} \coloneqq \sup_{x,y\in A} d(x,y)$. The collection of Borel probability measures on $\XC$ is $\PC(\XC)$, and the total variation distance between two measures $\pms, Q\in\PC(\XC)$ is denoted by $\TV(\pms, Q)$. Further, if the (generalized) Fr\'echet mean of $P$ exists and is unique, we write $\pmean$ for the unique element in $\mf(P)$.

\section{Main Result}\label{sec:main}

 The central finding of this work is stated for the subsequent general notion of a mean. 

\begin{definition}[Fr\'echet $\rho$-means] \label{def:F-mean} Let $\rho\colon \XC\times \XC \to \RR$ be a Borel measurable function and let $\pms\in \PC(\XC)$ be a probability measure. Assuming $\rho(x,\cdot) \in L^1(\pms)$ for each $x\in \XC$, its \emph{Fr\'echet $\rho$-function} is defined as $$F_\pms\colon \XC \to \RR,\quad  x\mapsto \mathbb{E}_{X\sim \pms}[\rho(x,X)].$$ Further, its \emph{Fr\'echet $\rho$-mean (set)} is defined as the collection of minimizers $$\mf(\pms) \coloneqq \argmin_{x\in \XC} F_\pms(x)\in \PF(\XC).$$ Moreover, the Fr\'echet $\rho$-mean is said to be \emph{honest} if for every $x \in \XC$ the function $\rho(\cdot,x) \colon \XC\to \RR$ is uniquely minimized by $x$. 
\end{definition}

\begin{remark}[Honesty]
That $\mf$ is honest is equivalent to $\mf(\delta_x) = \{x\}$ for every $x \in \XC$. This means that $\mf$ always recovers the position of a Dirac measure at a single point and does not exhibit a systematic bias under strong concentration. 
\end{remark}

\begin{remark}[Relation to  standard Fr\'echet mean]
    The standard Fr\'echet mean arises by taking $\rho\coloneqq d^2$ where $d$ is the metric on $\XC$. In particular, Fr\'echet means are honest. 
\end{remark}

The framework of Fr\'echet $\rho$-means captures a large selection of notions of means on geometric structures, see \Cref{sec:Implications} for a number of examples, and extends the standard Euclidean mean. The latter would arise by choosing $\XC = \RR^m$ and $\rho(x,y) = \norm{x-y}^2-\norm{y}^2$. For this setting, it is known that a unique minimizer exists if the measure $\pms$ admits a finite (first) moment (see \cite{sturm2003probability}). Otherwise, the associated Fr\'echet function may not admit finite values and the Euclidean mean is empty. 

For a general metric space $\XC$ and a given measurable function $\rho$, the Fr\'echet $\rho$-mean also does not necessarily exist, and moreover, if it does, it may not be unique. Our main results assess effects induced by non-uniqueness, which we formalize by the following assumption.

\begin{assumption} 
\label{ass:nonUnique}
The Fr\'echet $\rho$-mean $\mf$ is honest and $\pms\in\PC(\XC)$ is a probability measure such that $\mf(\pms)$ is nonempty and non-unique, i.e., $|\mf(\pms)|>1$. 
\end{assumption}

The following result confirms that the condition of non-uniqueness of Fr\'echet $\rho$-means is a property, unstable under perturbations of measures. 

\begin{lemma}[Instability of non-uniqueness]\label{lem:instability}
   Under \Cref{ass:nonUnique}, it follows for all $\pms\in\PC(\XC)$, all $t\in (0,1]$ and all $x\in \mf(P)$ that
   $$\mf(\pms_{x,t}) = \{x\}$$
   with the perturbed measure $\pms_{x,t} \coloneqq (1-t)\pms + t \delta_x$.
\end{lemma}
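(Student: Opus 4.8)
The plan is to exploit the linearity of the Fr\'echet $\rho$-function in the measure together with the honesty assumption. Fix $x \in \mf(\pms)$ and $t \in (0,1]$. By definition of the perturbed measure and linearity of the expectation, for every $y \in \XC$ we have
\begin{align*}
F_{\pms_{x,t}}(y) = (1-t)\,F_\pms(y) + t\,\rho(y,x).
\end{align*}
First I would observe that $F_\pms$ is minimized at $x$ (since $x \in \mf(\pms)$), so $F_\pms(y) \ge F_\pms(x)$ for all $y$, with the inequality being an equality precisely on the set $\mf(\pms)$. Second, honesty of $\mf$ means that $\rho(\cdot,x)$ is \emph{uniquely} minimized at $y = x$, so $\rho(y,x) > \rho(x,x)$ for every $y \ne x$. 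Combining these two facts, for any $y \ne x$,
\begin{align*}
F_{\pms_{x,t}}(y) = (1-t)\,F_\pms(y) + t\,\rho(y,x) \ge (1-t)\,F_\pms(x) + t\,\rho(x,x) = F_{\pms_{x,t}}(x),
\end{align*}
where the inequality is strict because $t>0$ forces the second term to contribute strictly (the first term contributes nonstrictly when $1-t \ge 0$). Hence $x$ is the unique minimizer of $F_{\pms_{x,t}}$, i.e.\ $\mf(\pms_{x,t}) = \{x\}$.

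One routine point to check along the way is that $F_{\pms_{x,t}}$ is well-defined, i.e.\ that $\rho(y,\cdot) \in L^1(\pms_{x,t})$ for each $y$; this follows immediately since $\rho(y,\cdot) \in L^1(\pms)$ by the standing hypothesis of \Cref{def:F-mean} and $\rho(y,\cdot)$ is trivially integrable against the Dirac mass $\delta_x$, so it is integrable against the convex combination. I would also note that the argument genuinely uses $t>0$: at $t=0$ we recover $\pms$ itself and non-uniqueness persists, which is why the statement is restricted to $t \in (0,1]$.

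I do not anticipate a real obstacle here — the lemma is essentially a one-line consequence of affineness of $F_{(\cdot)}$ in the measure plus the uniqueness built into honesty. The only thing requiring a modicum of care is phrasing the strict-versus-nonstrict inequality correctly for the two summands (the $F_\pms$ term is only weakly minimized at $x$, so all the strictness must come from the honesty term), and making sure the case $t=1$, where $\pms_{x,t} = \delta_x$, is also covered — but there honesty directly gives $\mf(\delta_x) = \{x\}$, consistent with the formula.
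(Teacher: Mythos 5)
Your proof is correct and follows essentially the same route as the paper's: decompose $F_{\pms_{x,t}}$ affinely in the measure, use that $x$ minimizes $F_\pms$ (weakly) and that honesty makes $\rho(\cdot,x)$ strictly minimized at $x$, with $t>0$ supplying the strict inequality. The extra remarks on integrability and the $t=1$ case are fine but not needed.
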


\begin{proof}
    By \Cref{ass:nonUnique}, the Fr\'echet $\rho$-function fulfills for $\tilde x\in \XC\backslash\{x\}$ that
    \begin{align*}
        F_{\pms_{x,t}}(\tilde x) &=  (1-t) F_{\pms}(\tilde x) + t \rho(\tilde x,x)\\
        &>  (1-t) \min_{x'\in \XC}F_{\pms}(x') + t \min_{x''\in \XC}\rho(x'',x) \\
        &= (1-t) F_\pms(x) + t \rho(x,x) =  F_{\tilde \pms_{x,t}}(x),
    \end{align*}
    where the strict inequality is due to $\mf$ being honest. 
\end{proof}

This suggests non-uniqueness of Fr\'echet $\rho$-means to be rather exceptional. Nonetheless, in the immediate vicinity of a distribution with a non-unique mean, a significant loss in the performance of estimating population mean based on i.i.d.\ observations manifests.

\begin{theorem}[Uniform lower bound]\label{thm:minimaxLower}
	For given sample size $n\in \NN$, under \Cref{ass:nonUnique} it follows for all $\pms\in\PC(\XC)$, all $\epsilon \in (0,1)$ and all $p\geq 1$ that 
    \begin{align}\label{eq:minimaxLowerExp}
        \adjustlimits \inf_{\substack{\tmean\\[0.1cm] \phantom{.}}} \sup_{\substack{0 < t \leq \epsilon\\ x \in \mf(\pms)}} \mathbb{E}_{\pms_{x,t}}\left[d^p(\tmean(X_1, \dots, X_n), \pmean_{x,t})\right] \geq  \frac{1}{2}\left(\frac{\diam{\mf(\pms)}}{2}\right)^p,
\end{align}
    where the infimum is taken over all (measurable) estimators $\tmean \colon \XC^n \to \XC$ based on i.i.d.\ random variables $X_1, \dots, X_n\sim \pms_{x,t}\coloneqq (1-t)\pms + t \delta_x$. 
     Further, for all $\eta >0$ arbitrarily small it holds that 
     \begin{align}\label{eq:minimaxLower}\adjustlimits \inf_{\substack{\tmean\\[0.1cm] \phantom{.}}} \sup_{\substack{0 < t \leq \epsilon\\ x \in \mf(\pms)}} \mathbb{P}_{\pms_{x,t}}\left( d(\tmean(X_1, \dots, X_n), \pmean_{x,t}) \geq \frac{\diam{\mf(\pms)}- \eta}{2}\right) \geq  \frac{1}{2}.
     \end{align}
\end{theorem}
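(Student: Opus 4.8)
The strategy is a classic two-point (Le Cam style) argument, exploiting the instability shown in \Cref{lem:instability}. Pick two points $x_0, x_1 \in \mf(\pms)$ that nearly realize the diameter: given $\eta > 0$, choose them with $d(x_0, x_1) \geq \diam{\mf(\pms)} - \eta$ (if the supremum defining the diameter is attained, take $\eta = 0$ in the expectation bound; otherwise pass to the limit). Now fix $t = t_n \in (0,\epsilon]$ to be sent to $0$ at the end, and consider the two perturbed measures $\pms_{x_0,t}$ and $\pms_{x_1,t}$. By \Cref{lem:instability}, their Fr\'echet $\rho$-means are the singletons $\{x_0\}$ and $\{x_1\}$ respectively, so an estimator $\tmean$ must simultaneously be close to $x_0$ under $\pms_{x_0,t}^{\otimes n}$ and close to $x_1$ under $\pms_{x_1,t}^{\otimes n}$. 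The key point is that these two product measures are statistically almost indistinguishable when $t$ is small: since $\TV(\pms_{x_0,t}, \pms_{x_1,t}) \leq t$, the tensorization / subadditivity bound gives $\TV(\pms_{x_0,t}^{\otimes n}, \pms_{x_1,t}^{\otimes n}) \leq n t$, which can be made arbitrarily small by choosing $t \leq \eta'/n$ for any $\eta' > 0$ (note $n$ is fixed throughout).

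For the probability statement \eqref{eq:minimaxLower}: define the event $A = \{ d(\tmean, x_0) < (d(x_0,x_1) - \eta)/2 \}$ where I write $\eta$ generically (absorbing the gap between $d(x_0,x_1)$ and $\diam{\mf(\pms)}$). By the triangle inequality, on $A$ we have $d(\tmean, x_1) \geq d(x_0,x_1) - d(\tmean,x_0) > (d(x_0,x_1)-\eta)/2 \geq (\diam{\mf(\pms)} - \eta')/2$ for a slightly larger $\eta'$, so $A$ and the "bad for $x_1$" event are disjoint-complementary in the relevant sense. Hence
\begin{align*}
\sup_{i \in \{0,1\}} \mathbb{P}_{\pms_{x_i,t}}\!\Big( d(\tmean, x_i) \geq \tfrac{\diam{\mf(\pms)} - \eta'}{2}\Big)
&\geq \tfrac12\Big( \mathbb{P}_{\pms_{x_0,t}}(A^c) + \mathbb{P}_{\pms_{x_1,t}}(A)\Big)\\
&\geq \tfrac12\Big(1 - \TV(\pms_{x_0,t}^{\otimes n}, \pms_{x_1,t}^{\otimes n})\Big) \geq \tfrac12(1 - nt).
\end{align*}
Taking the sup over $0 < t \leq \epsilon$ on the left and then letting $t \downarrow 0$ (equivalently, just picking $t \leq \eta'/n$) pushes the right side to $\tfrac12$; since $x_0, x_1 \in \mf(\pms)$ are admissible choices in the sup over $x$, this yields \eqref{eq:minimaxLower}. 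For the $L^p$ bound \eqref{eq:minimaxLowerExp}, replace the indicator argument by Markov: on $\{d(\tmean, x_0) < r\} \cup \{d(\tmean, x_1) < r\}$ with $r = d(x_0,x_1)/2$ the two sets are disjoint, so $\max_i \mathbb{E}_{\pms_{x_i,t}}[d^p(\tmean, x_i)] \geq r^p \cdot \tfrac12(1 - nt) \to \tfrac12 (d(x_0,x_1)/2)^p$, and since $\diam{\mf(\pms)}$ is a supremum one recovers $\tfrac12(\diam{\mf(\pms)}/2)^p$ by letting the two points exhaust the diameter (or directly if attained).

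**Main obstacle.** The conceptual content is light; the delicate points are bookkeeping. First, one must be careful that the order of quantifiers in the theorem — $\inf_{\tmean} \sup_{0 < t \leq \epsilon,\, x}$ — allows choosing $t$ depending on $n$ but \emph{not} on $\tmean$; this is fine because the bound $\TV \leq nt$ is uniform over all estimators, so after taking the infimum we still get to choose $t$ small, and in fact the sup over $t$ makes the argument even easier (we only need \emph{some} small $t \leq \epsilon$ to witness the bound). Second, the passage $\diam{\mf(\pms)} - \eta \to \diam{\mf(\pms)}$ in the expectation version requires either attainment of the diameter or a short limiting argument choosing $x_0,x_1$ with $d(x_0,x_1) \to \diam{\mf(\pms)}$ and noting the right-hand side is a supremum over such pairs — the stated inequality \eqref{eq:minimaxLowerExp} with the factor $(\diam{\mf(\pms)}/2)^p$ is then obtained in the limit. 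Finally one should record the standard fact $\TV(\mu^{\otimes n}, \nu^{\otimes n}) \leq n \,\TV(\mu,\nu)$ and $\TV(\pms_{x_0,t}, \pms_{x_1,t}) = t\, \TV(\delta_{x_0},\delta_{x_1}) \leq t$, which is immediate from the definition of the perturbed measures.
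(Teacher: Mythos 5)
Your proof is correct and follows essentially the same Le Cam two-point argument as the paper: pick two near-diametral points of $\mf(\pms)$, use \Cref{lem:instability} to identify the perturbed means, and drive the total variation between the perturbed product measures to zero by letting $t\downarrow 0$, then pass from the probability bound to the $L^p$ bound via Markov and exhaust the diameter. The only inessential difference is that you control $\TV(\pms_{x,t}^{\otimes n},\pms_{y,t}^{\otimes n})$ by the subadditivity bound $n\,\TV(\pms_{x,t},\pms_{y,t})\le nt$ rather than by the Hellinger tensorization the paper invokes; since $n$ is fixed, both yield the same conclusion.
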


A key insight from \Cref{thm:minimaxLower} is that the uniform error in estimating the Fr\'echet $\rho$-mean does not improve as the sample size increases. This implies that under \Cref{ass:nonUnique} \emph{every} method of estimating the respective mean based on i.i.d.\ measurements will be affected by deteriorating constants or slower convergence rates (see \Cref{sec:Implications} for examples of both manifesting for various notions of means).  More precisely, for every estimation procedure there is a probability distribution for which it 
does not realize a  statistical accuracy below a precision determined by $\diam{\mf(\pms)}$: Choosing $p=2$ and noting that $1/9 < 1/8$ the assertion of the corollary below follows at once from (\ref{eq:minimaxLowerExp}) above. 

\begin{corollary}\label{cor:effects_FSS}
	Under \Cref{ass:nonUnique} for every estimator $\hat {\pms}\colon \bigcup_{n \in \NN} \XC^n \to  \XC$ and every sample size $n\in \NN$ there exists a probability measure $\pms_{\hat{\pms},n}\in \PC(\XC)$ for which the Fr\'echet $\rho$-mean $\mf(\pms_{\hat{\pms},n})=\{\pmean_{\hat{\pms},n}\}$ is unique such that for independent $X_1, \dots, X_n \sim \pms_{\hat{\pms},n}$ it follows
	\begin{align*}
  		 \mathbb{E}_{\pms_{\hat{\pms},n}}\left[ d^2(\hat \pms(X_1, \dots, X_n),\pmean_{\hat{\pms},n})\right] \geq  \diam{\mf(\pms)}^2/9. 
	\end{align*}
\end{corollary}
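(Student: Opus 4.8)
The plan is to obtain \Cref{cor:effects_FSS} as an immediate consequence of \Cref{thm:minimaxLower}; the only point requiring care is converting a lower bound on a \emph{supremum} over a family of perturbed measures into the existence of one concrete distribution that realizes (a slightly weaker version of) that bound.

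I would fix an estimator $\hat{\pms}\colon\bigcup_{n\in\NN}\XC^n\to\XC$ and a sample size $n\in\NN$. Its restriction to $\XC^n$ is an admissible competitor in the infimum of \eqref{eq:minimaxLowerExp}, so applying \Cref{thm:minimaxLower} with $p=2$ and any $\epsilon\in(0,1)$ gives
\[
  \sup_{\substack{0<t\le\epsilon\\ x\in\mf(\pms)}}\mathbb{E}_{\pms_{x,t}}\!\left[d^2\big(\hat{\pms}(X_1,\dots,X_n),\pmean_{x,t}\big)\right]
  \;\ge\; \frac12\left(\frac{\diam{\mf(\pms)}}{2}\right)^{2}
  \;=\; \frac{\diam{\mf(\pms)}^2}{8}
  \;>\; \frac{\diam{\mf(\pms)}^2}{9}.
\]
Since $\diam{\mf(\pms)}^2/9$ lies strictly below this supremum, it is not an upper bound for the family being supremized, so there exist $t^{*}\in(0,\epsilon]$ and $x^{*}\in\mf(\pms)$ with $\mathbb{E}_{\pms_{x^{*},t^{*}}}[d^2(\hat{\pms}(X_1,\dots,X_n),\pmean_{x^{*},t^{*}})]\ge\diam{\mf(\pms)}^2/9$. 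I would then set $\pms_{\hat{\pms},n}\coloneqq\pms_{x^{*},t^{*}}=(1-t^{*})\pms+t^{*}\delta_{x^{*}}$. By \Cref{lem:instability} its Fr\'echet $\rho$-mean is $\mf(\pms_{\hat{\pms},n})=\{x^{*}\}$, hence unique with $\pmean_{\hat{\pms},n}=x^{*}=\pmean_{x^{*},t^{*}}$, and the previous display is exactly the claimed inequality.

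The step I expect to matter is precisely this last passage: the supremum in \eqref{eq:minimaxLowerExp} need not be attained — letting $t\downarrow0$ drives $\pms_{x,t}$ toward $\pms$, whose mean is non-unique — so one cannot in general produce a single distribution achieving the sharp constant $1/8$, and the deliberate slack to $1/9$ in the statement is exactly what buys a concrete witnessing measure. Everything else is just unwinding \Cref{thm:minimaxLower}, which is the genuinely substantive input; for context, that theorem is proved by a two-point (Le Cam) argument in which one picks $x_0,x_1\in\mf(\pms)$ with $d(x_0,x_1)$ arbitrarily close to $\diam{\mf(\pms)}$, observes that $\TV(\pms_{x_0,t}^{\otimes n},\pms_{x_1,t}^{\otimes n})\to0$ as $t\downarrow0$ while (by \Cref{lem:instability}) the means of the two perturbed measures are $x_0$ and $x_1$, and combines $\max\{d(z,x_0),d(z,x_1)\}\ge d(x_0,x_1)/2$ with the standard reduction to hypothesis testing to produce the sample-size-free error floor.
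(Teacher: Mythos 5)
Your proposal is correct and matches the paper's own (very terse) argument: the authors likewise note that choosing $p=2$ in \eqref{eq:minimaxLowerExp} gives a supremum of at least $\diam{\mf(\pms)}^2/8 > \diam{\mf(\pms)}^2/9$, so some perturbed measure $\pms_{x^*,t^*}$ witnesses the bound, and \Cref{lem:instability} guarantees its mean is unique. Your added remark about why the slack from $1/8$ to $1/9$ is needed (non-attainment of the supremum as $t\downarrow 0$) is accurate and goes slightly beyond what the paper spells out.
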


\begin{remark}[Comparison with \citet{tran2021smeariness}]
    The work by \cite{tran2021smeariness} asserts that nearby a smeary probability distribution, i.e., a distribution for which the empirical Fr\'echet means admit slower than parametric convergence rates, there exists a probability measure for which the variance modulation \eqref{eq:varMod}  of the empirical Fr\'echet mean can be arbitrary large for some sample sizes but stays bounded. This phenomenon has been called \emph{finite sample smeariness} by \cite{hundrieser2020finite}. In addition, \Cref{cor:effects_FSS} asserts that nearby the regime of non-uniqueness this phenomenon cannot be overcome by employing another estimation technique. 
\end{remark}

\begin{remark}[Complementary contributions]
    Previous contributions by \cite{schotz2019convergence, ahidar2020convergence, le2022fast} on the convergence behavior and by \cite{escande2023concentration,romon2023convex} on  concentration properties of empirical Fr\'echet means toward its population counterpart impose certain strictly positive curvature conditions on the behavior of the Fr\'echet function near its minimum. In particular, such assumption ensures that the Fr\'echet population mean is unique. Our work is concerned with the complementary regime where non-uniqueness occurs and establishes a lower bound which does not improve as the sample size increases. Insofar, our work confirms the necessity of separability conditions to infer uniform convergence results. 
    \end{remark}

\begin{proof}[Proof of \Cref{thm:minimaxLower}]
    We first establish the lower bound \eqref{eq:minimaxLower} in probability for which we use LeCam's two-point method \citep[Section 2]{tsybakov2009}. To this end, note by \Cref{ass:nonUnique} that $\diam{\mf(\pms)}>0$ and take $x,y\in \mf(\pms)$ such that $d(x,y) \geq \diam{\mf(\pms)}- \eta$.
    Then, for each $t \in (0,\epsilon]$ it follows by \Cref{lem:instability} that $\pmean_{x,t} = x$ and   $\pmean_{y,t} = y$, and thus
    \begin{align*}
        d(\pmean_{x,t}, \pmean_{y,t}) \geq \diam{\mf(\pms)} - \eta.
    \end{align*}
    Meanwhile, the total variation distance between $\tilde \pms_{x,t}$ and $\tilde \pms_{y,t}$ is bounded for each $t\in (0,\epsilon]$ by $\TV(\tilde \pms_{x,t}, \tilde \pms_{y,t}) \leq 2t.$ 
    Consequently, for $t^*\!\in (0,\epsilon]$ to be specified later, it follows from Inequality (2.9) of \citet{tsybakov2009} that the left-hand side of \eqref{eq:minimaxLower} is lower bounded by
    \begin{align}
        &\adjustlimits \inf_{\tmean} \max_{z \in\{x,y\}} \mathbb{P}_{\pms_{z,t^*}}\left( d(\tmean(X_1, \dots, X_n), \pmean_{z,t^*}) \geq \frac{\diam{\mf(\pms)}- \eta}{2}\right)\notag\\
       \geq \;& \adjustlimits \inf_{\hat \Psi_n} \max_{z \in\{x,y\}} \mathbb{P}_{\pms_{z,t^*}}\left( \hat\Psi_n(X_1, \dots, X_n) \neq z\right), \label{eq:lowerbound_test}
    \end{align}
    where the infimum in the second line is taken over all (measurable) tests $\hat \Psi_n\colon \XC^n \to \{x,y\}$.
    Using Theorem 2.2(i) and LeCam's inequalities (Lemma 2.3) in conjunction with the tensorization property of the Hellinger distance (p.\ 83, Property (iv)) of \cite{tsybakov2009} it follows that the right-hand side of \eqref{eq:lowerbound_test} is lower bounded by 
    \begin{align}\label{eq:lowerbound_TV}
        \frac{1}{2}\left(1 - \TV(\tilde \pms_{x,t^*}^{\otimes n}, \tilde \pms_{y,t^*}^{\otimes n})\right) \geq \frac{ 1- \sqrt{1 - (1-2t^*)^n}}{2}.
    \end{align}
    Hence, by choosing $t^*>0$ arbitrarily small, we conclude the validity of inequality \eqref{eq:minimaxLower}.

 For the lower bound \eqref{eq:minimaxLowerExp} in expectation we use Markov's inequality with \eqref{eq:minimaxLower}. This yields for $p \geq 1$ and $\eta \in (0,\diam{\mf(\pms)})$  the left-hand side of \eqref{eq:minimaxLowerExp} to be lower bounded~by
    \begin{align*}
        & \left(\frac{\diam{\mf(\pms)}- \eta}{2}\right)^p \adjustlimits \inf_{\substack{\tmean\\[0.1cm] \phantom{.}}} \sup_{\substack{0 < t \leq \epsilon\\ x \in \mf(\pms)}} \mathbb{P}_{\pms_{x,t}}\left( d(\tmean(X_1, \dots, X_n), \pmean_{x,t}) \geq \frac{\diam{\mf(\pms)}- \eta}{2}\right) \\
        \geq \;  & \left(\frac{\diam{\mf(\pms)}- \eta}{2}\right)^p   \frac{1}{2}.
    \end{align*}
    Choosing $\eta>0$ arbitrarily small yields the assertion. 
\end{proof}

\begin{remark}[Impact on testing]\label{rmk:testing}
    The proof of \Cref{thm:minimaxLower} (Inequalities \eqref{eq:lowerbound_test} and \eqref{eq:lowerbound_TV}) also reveals that under \Cref{ass:nonUnique} every test $\hat\Psi_n \colon \XC^N \to \{x,y\}$, to assess if the Fr\'echet mean is located at $x$ or $y$,  experiences a considerable loss in performance if the data stems from a probability distribution that is close to $P$ with $x,y \in \mf(\pms)$. In fact, uniformly over the collection $\{\pms_{z,t} \,|\, z \in \{x,y\}, t\in (0,\epsilon)\}$ no test  performs better than a random coin flip. 
\end{remark}

\section{Slow Rates and Deteriorating Constants: Examples}\label{sec:Implications}

The previous section highlighted inherent statistical challenges in estimating Fr\'echet means nearby the regime of non-uniqueness. 
For illustration, in this section, we revisit and expand beyond the literature specific examples of Fr\'echet means, demonstrating \emph{slow rates} or \emph{deteriorating constants} in the convergence behavior of empirical plug-in estimators. 
Throughout these examples, we always consider i.i.d.\ observations $X_1, \dots, X_n \sim \pms$ from a probability distribution $\pms$, which serve to define the empirical measure $\ems\coloneqq \frac{1}{n}\sum_{i = 1}^{n} \delta_{X_i}$.  

Given a measurable and integrable function $\rho$ as in \Cref{def:F-mean}, this leads to a notion $\mf$ of a Fr\'echet $\rho$-mean with population (resp. empirical) mean 
$\mf(\pms)$ (resp. $\mf(\ems)$) and 
population (resp. empirical) Fr\'echet functions 
$F_\pms$ (resp. $F_n \coloneqq F_{\ems}$). 

\subsection{Extrinsic Means on Manifolds}\label{subsec:extrinsicMeans}

Let $\XC$ be a smooth manifold (e.g. \citealt{lee2010introduction} for a textbook) that is smoothly embedded via $j \colon \XC\to \RR^m$ in the Euclidean space and equipped with Euclidean distance $d_E\colon \XC \times \XC \to [0, \infty), (x,y) \mapsto \norm{j(x)-j(y)}$. Then, the \emph{extrinsic mean} set $\mf_E$ is defined as the Fr\'echet mean set on $\XC$ with respect to $\rho = d_E^2$. 

Given a probability distribution $\pms\in \PC(\XC)$, \cite{HL96,HL98} obtained  its \emph{mean location}, which \cite{bhattacharya2003large} called its \emph{extrinsic mean}, thus by minimizing the Euclidean distance under constraining to the manifold. Hence, the extrinsic mean set is given by the orthogonal projection of the Euclidean mean $\EE_{j_{\#}\pms}[X]$ in ambient space (of the pushforward measure $j_{\#}\pms$) to the embedded manifold
\begin{align*}
  \mf_E(P) = \argmin_{x \in \XC} \norm{j(x) - \EE_{j_{\#}\pms}[X]}\,.
\end{align*}
If nonvoid (e.g. if the manifold is compact), uniqueness of $\mf_E$ therefore occurs if and only if $\EE_{j_{\#}\pms}[X]$ is located at a position $y\in \RR^m$ for which there exists a unique orthogonal projection $x \in \XC$, i.e., $\norm{j(x) - y}< \norm{j(x') - y}$ for all $x' \in \XC$. The collection of all such points is termed \emph{non-focal points}, whereas elements of its  complement are called \emph{focal points}.

In consequence, \Cref{thm:minimaxLower} finds application for a distribution $\pms$ if the Euclidean mean of the embedded distribution is a focal point. 

A simple example arises in the context of spheres $\SSS^m\coloneqq \{x\in \RR^{m+1} \,|\, \norm{x}_2 = 1\}$ for $m \geq 1$ which can be embedded into $\RR^{m+1}$ via the inclusion map. Here, every point $\RR^m\backslash\{0\}$ is non-focal. Since the distance minimizing projection is given by the normalization $x\mapsto x/\norm{x}$, we obtain the following characterization of extrinsic mean sets on the sphere,\begin{align*}
  \mf_E \colon \PC(\SSS^m) \mapsto \PF(\SSS^m), \quad \pms \mapsto \begin{cases}
 	\{ \EE_{\pms}[X] / \norm{\EE_{\pms}[X]} \} & \text{ if } \EE_{\pms}[X] \neq 0,\\
 	\; \SSS^m & \text{ if } \EE_{\pms}[X] = 0. 
 \end{cases}
\end{align*}
Hence, by \Cref{thm:minimaxLower}, the empirical extrinsic mean will likely perform statistically worse if the Euclidean mean $\EE_{\pms}[X]$ is close to the origin. Indeed, since the orthogonal projection to the tangent space of the sphere at $x\in \SSS^m$ is explicitly given by
$$ \RR^m \to \{z \in \RR^m: z^Tx=0\}, y \mapsto  (\Id_{n+1}- xx^T)y\,$$
the \emph{deteriorating constants} can be directly observed in the following, see also \cite{bhattacharya2005large}.

\begin{theorem}[\citealt{HLR96}, CLT extrinsic spherical mean] \label{prop:CLT-extrinsic-sphere}
	Let $\pms\in \PC(\SSS^m)$ be a probability distribution such that $\EE_{\pms}[X]\neq 0$. Then, the extrinsic mean $\mf_E(\pms)= \{\pmean\}$ is unique and for i.i.d.\ random elements $X_1, \dots, X_n \sim \pms$ it follows, as $n\to \infty$, for a measurable selection of extrinsic sample means $\emean \in  \mf_E(\ems)$ that 
	\begin{align*}
  \sqrt{n}( \emean - \pmean ) \konvD \NC(0, C \Sigma C^T).
	\end{align*}
Herein, the asymptotic covariance is determined by matrices $\Sigma = (\Cov[X_1^i,X_1^j])_{i,j= 1}^{m+1}$ and $C = (\Id_{m+1} - \pmean\pmean^T)/\norm{\EE_{\pms}[X]}$.
\end{theorem}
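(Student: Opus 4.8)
The plan is to obtain the central limit theorem for the empirical extrinsic mean on $\SSS^m$ by combining the classical multivariate CLT in the ambient space $\RR^{m+1}$ with the delta method applied to the normalization map $g\colon y \mapsto y/\norm{y}$. First I would note that since $\EE_\pms[X] =: \mu \neq 0$, uniqueness of the extrinsic mean follows from the characterization $\mf_E(\pms) = \{\mu/\norm{\mu}\}$ derived above, so we may write $\pmean = \mu/\norm{\mu}$. Similarly, whenever the empirical ambient mean $\bar X_n := \frac1n\sum_{i=1}^n X_i$ is nonzero --- which happens with probability tending to one, since $\bar X_n \to \mu \neq 0$ almost surely --- the empirical extrinsic mean set is the singleton $\{\bar X_n / \norm{\bar X_n}\}$, so any measurable selection $\emean \in \mf_E(\ems)$ coincides with $g(\bar X_n)$ on an event of probability $\to 1$; by Slutsky's lemma it therefore suffices to establish the limit law for $\sqrt n\,(g(\bar X_n) - g(\mu))$.

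Next I would apply the multivariate CLT: since the $X_i$ take values in the compact set $\SSS^m \subset \RR^{m+1}$ they have finite second moments, so $\sqrt n\,(\bar X_n - \mu) \konvD \NC(0,\Sigma)$ with $\Sigma = (\Cov[X_1^i, X_1^j])_{i,j=1}^{m+1}$. The map $g(y) = y/\norm{y}$ is smooth on $\RR^{m+1}\setminus\{0\}$, hence differentiable at $\mu$, and a direct computation gives its Jacobian $Dg(\mu) = \norm{\mu}^{-1}\bigl(\Id_{m+1} - (\mu/\norm{\mu})(\mu/\norm{\mu})^T\bigr) = \norm{\mu}^{-1}(\Id_{m+1} - \pmean\pmean^T) = C$, which is exactly the matrix $C = (\Id_{m+1} - \pmean\pmean^T)/\norm{\EE_\pms[X]}$ in the statement (note $\norm{\mu} = \norm{\EE_\pms[X]}$ and, since $\pmean$ is a unit vector, the projector $\Id - \pmean\pmean^T$ is idempotent). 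The delta method then yields $\sqrt n\,(g(\bar X_n) - \pmean) \konvD \NC(0, C\Sigma C^T)$, and combining with the Slutsky argument above gives $\sqrt n\,(\emean - \pmean) \konvD \NC(0, C\Sigma C^T)$, as claimed.

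The only points requiring slight care --- rather than genuine obstacles --- are two bookkeeping issues. First, one must justify replacing the possibly non-canonical measurable selection $\emean \in \mf_E(\ems)$ by $g(\bar X_n)$: this is handled by the almost sure convergence $\bar X_n \to \mu \neq 0$, which forces $\bar X_n \neq 0$ (hence $\mf_E(\ems)$ a singleton equal to $\{g(\bar X_n)\}$) eventually almost surely, so the two sequences agree on an event of probability $\to 1$ and have the same weak limit. Second, one should confirm that the degeneracy of the limiting covariance --- $C\Sigma C^T$ is supported on the tangent space $T_{\pmean}\SSS^m = \{z : z^T\pmean = 0\}$, consistent with $\emean - \pmean$ being asymptotically tangent --- is simply a reflection of $C$ projecting onto that subspace, and requires no extra argument. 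I expect no substantive difficulty; the result is essentially the delta method packaged for the sphere, and the main ``work'' is identifying the Jacobian of the normalization map with the stated matrix $C$.
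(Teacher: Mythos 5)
Your argument is correct and is exactly the standard route: the paper does not reprove this result (it is cited to Hendriks, Landsman and Ruymgaart, 1996), and the intended proof is precisely the ambient multivariate CLT combined with the delta method for $y\mapsto y/\norm{y}$, whose Jacobian at $\mu$ is the stated matrix $C$. Your handling of the measurable selection via the event $\{\bar X_n\neq 0\}$ having probability tending to one is also the right way to dispose of that technicality.
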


From this distributional limit it is apparent that the convergence rate of the empirical extrinsic spherical mean will always be of parametric order. However, the variance of the limit distribution is reciprocally linked to the proximity of the Euclidean mean $\EE_{\pms}[X]$ to the origin, an observation which aligns with \Cref{thm:minimaxLower}.

\subsection{Intrinsic Means on Manifolds}

A notable conceptual drawback of extrinsic means is that they depend on the embedding. 
The following alternative notion of a mean only depends on intrinsic properties: Let
$\XC$ be a 
Riemannian manifold (e.g. \citealt{lee2018introduction} for a textbook) 
with its intrinsic (also called geodesic) distance $d_I$ induced by the Riemannian metric. The intrinsic mean $\mf_I$ is then defined as the Fr\'echet mean on $\XC$ with respect to $\rho = d_I^2$. 

In the following we focus on two Riemannian manifolds with positive (global) curvature, the circle and the sphere, which feature probability distributions with nonunique means (e.g.\ the uniform distribution) 
and where the implications of \Cref{thm:minimaxLower} have been observed for distributional limits via slower than parametric convergence rates or increasing asymptotic variances.

\paragraph*{The circle } is 
the space $\SSS = [-\pi,\pi)$ with $-\pi$ and $\pi$ identified and equipped with the usual (intrinsic) arc length distance $d_{I}(x,y) = \min\{|x-y|, 2\pi -|x-y|\}.$  An extensive analysis of intrinsic circular means has been carried out by \cite{hotz2015intrinsic}. According to their findings, for a probability measure $\pms$ and a mean element $a\in \mf_I(\pms)$ the antipodal point $q(a) \coloneqq a +\pi\Indicator{[-\pi,0)}(a) - \pi\Indicator{[0,\pi)}(a)$ does not admit positive mass. Further, if $\pms$ features a continuous density near $q(a)$, then it is dominated by $\frac{1}{2\pi}$, the density of the uniform distribution \citep[Theorem 1]{hotz2015intrinsic}. Moreover, if the density is equal to $\frac{1}{2\pi}$ on a neighborhood near $q(a)$, then $a$ is not a unique intrinsic mean \cite[Corollary 1]{hotz2015intrinsic}. 

The following details and extends general central limit theorems from \cite{mckilliam2012direction, hotz2015intrinsic}, highlighting an intimate relation between the asymptotic law and the behavior of the underlying probability measure near the antipode of the intrinsic population mean.

\begin{proposition}[CLT intrinsic circular mean]\label{prop:CLT_Circle}
    Let $\pms \in \PC(\SSS)$ be a probability distribution with a unique intrinsic mean $\mf_I(\pms) = \{0\}$, assume that $\pms$ features a continuous density on $(\pi-\delta, \pi)\cup [-\pi, -\pi + \delta)$ with respect to the arc length measure for some $\delta>0$, and denote by $\sigma^2 \coloneqq \EE_{X\sim \pms}[X^2]$ the Euclidean variance. Further, for $n\in \NN$ consider i.i.d.\ random variables $X_1, \dots, X_n \sim \pms$ and a measurable selection of intrinsic sample means $\emean \in \hat \mf_I(\ems)$
	\begin{itemize}
	 \item[$(i)$] In case $f(-\pi) = \lim_{x \searrow -\pi} f(x) = \lim_{x \nearrow \pi} f(x) < \frac{1}{2\pi} $, it follows  as $n \to \infty$ that
	 $$    \sqrt{n} \,\emean  \xrightarrow{\mathcal{D}} \mathcal{N}\left(0,\sigma^2 \big(1-2\pi f(-\pi)\big)^{-2}  \right). $$
     \item[$(ii)$] In case $f(-\pi + \epsilon) = \frac{1}{2\pi} - G'(\epsilon) + o(G'(\epsilon))$ and $
    f(\pi - \epsilon)= \frac{1}{2\pi} -  G'(\epsilon) + o(G'(\epsilon))$, for a non-negative, locally at $0$ strictly convex, continuously differentiable function $G\colon [0, \delta) \rightarrow \RR_{\geq 0}$ fulfilling $G(0)=G'(0)=0$, it follows  as $n \to \infty$ that
    $$  \sqrt{n} \, \sign(\emean)\, 2\pi\, G(|\emean|) \xrightarrow{\mathcal{D}} \mathcal{N}\left(0,\sigma^2 \right)\,. $$
	\end{itemize}
    \end{proposition}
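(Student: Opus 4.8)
The plan is to prove a central limit theorem for the empirical intrinsic circular mean via an M-estimation argument, carefully analyzing the empirical Fréchet function near the population minimizer $0$. Recall that on the circle the squared arc-length distance is smooth except at the antipodal point, so $F_\pms(x) = \EE[d_I^2(x,X)]$ has a kink structure governed by how much mass $\pms$ places near $\pm\pi$. The starting point is the classical formula: for $x$ in a neighborhood of $0$, $d_I(x,X)^2 = (X-x)^2$ whenever $X$ avoids the arc antipodal to $x$, and the correction terms come precisely from the arc of length $2|x|$ near $q(0)=\pi$ that ``wraps around''. Concretely one writes, for small $x>0$,
\begin{align*}
F_\pms(x) - F_\pms(0) = \sigma^2\text{-type quadratic term} \;-\; 2\int_{\pi - x}^{\pi} \big(\text{wrap correction}\big)\, \dif\pms,
\end{align*}
and an analogous expression for $x<0$ involving the density near $-\pi$. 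Under the hypotheses the correction integral is, to leading order, $4\pi x \big(\tfrac{1}{2\pi} - f(-\pi)\big) + \text{lower order}$ in case $(i)$, producing a genuinely quadratic-with-modified-curvature Fréchet function, and $4\pi\, G(x) + o(G(x))$ in case $(ii)$, producing a degenerate (non-quadratic) minimum. The first derivative identity $F_\pms'(x) = 2x\,(1 - 2\pi f(\text{antipode}))$ up to lower order, respectively $F_\pms'(x) = 4\pi G'(x) + o(G'(x))$, is the analytic heart of the argument.

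Next I would set up the estimating equation for the empirical mean. Since $\emean$ minimizes $F_n(x) = \frac1n\sum_i d_I^2(x,X_i)$ and lies (eventually, by consistency — which follows from the cited strong consistency results and uniqueness of $\mf_I(\pms)$) in a small neighborhood of $0$ where the map $x\mapsto d_I^2(x,X_i)$ is differentiable except on a null set, one obtains the first-order condition $F_n'(\emean) = 0$, i.e. $\frac1n\sum_i \psi(\emean, X_i) = 0$ where $\psi(x,y) = \partial_x d_I^2(x,y) = -2(y-x)$ plus a jump of size $4\pi$ (with appropriate sign) when $y$ crosses the antipode of $x$. Decompose $F_n'(\emean) = F_n'(\emean) - F_\pms'(\emean) + F_\pms'(\emean)$. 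The empirical process term $\sqrt{n}\,(F_n' - F_\pms')(\emean)$ is handled by showing stochastic equicontinuity of the class $\{\psi(x,\cdot) : x \text{ near } 0\}$ — this class has bounded variation uniformly and its bracketing entropy is controlled, so $\sqrt{n}(F_n'-F_\pms')(\emean) = \sqrt{n}(F_n'-F_\pms')(0) + o_\Prb(1)$ — and $\sqrt{n}\,(F_n'-F_\pms')(0) = -\frac{2}{\sqrt n}\sum_i X_i + o_\Prb(1) \konvD \NC(0, 4\sigma^2)$ by the ordinary CLT (the antipodal jump contributes nothing at $x=0$ since $q(0)$ carries no mass). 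Combining with $F_n'(\emean)=0$ gives $\sqrt{n}\, F_\pms'(\emean) = \frac{2}{\sqrt n}\sum X_i + o_\Prb(1) \konvD \NC(0,4\sigma^2)$.

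In case $(i)$, substituting $F_\pms'(\emean) = 2\emean(1-2\pi f(-\pi)) + o(\emean)$ and noting the coefficient is strictly positive (by the Hotz–Huckemann bound $f(-\pi)<\tfrac{1}{2\pi}$, which holds strictly by hypothesis), one solves: $\sqrt{n}\,\emean(1-2\pi f(-\pi))\konvD \NC(0,\sigma^2)$, i.e. $\sqrt n\,\emean \konvD \NC(0, \sigma^2(1-2\pi f(-\pi))^{-2})$, which is the claim after cancelling the factor $2$ correctly (the $4\sigma^2$ and the factor $4$ from the squared coefficient match). In case $(ii)$, $F_\pms'(\emean) = 4\pi\,\sign(\emean) G'(|\emean|) + o(G'(|\emean|))$, so $\sqrt n \cdot 4\pi\,\sign(\emean)G'(|\emean|) \konvD \NC(0, 4\sigma^2)$; the deduction of the stated form with $G$ rather than $G'$ requires an extra step — write $G(|\emean|) = G'(0^+)|\emean| + \cdots$; here since $G'(0)=0$ and $G$ is strictly convex at $0$, $\emean\to 0$ forces $G(|\emean|)$ and $|\emean|G'(|\emean|)$ to be comparable only up to the convexity, so one instead argues directly that $\sqrt n\, \sign(\emean)\,2\pi G(|\emean|)$ has the claimed limit by relating $G$ to $G'$ via a mean-value/convexity estimate and the self-consistency $\emean = o_\Prb(1)$, possibly invoking a delta-method-type continuity argument for the (non-smooth) inverse of $G$.

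The main obstacle I anticipate is the degenerate case $(ii)$: one cannot simply Taylor-expand and invert, because the minimum of the Fréchet function is flat (this is exactly the smeary regime), so the rate of $\emean$ is governed by inverting $G'$ and is typically slower than $n^{-1/2}$. The delicate points are (a) justifying that the remainder $o(G'(\epsilon))$ in the density expansion translates to an $o_\Prb$ remainder at the random point $\emean$ — this needs a uniform (in a shrinking neighborhood) version of the expansion together with a preliminary crude rate for $\emean$; (b) controlling the empirical process of the non-Lipschitz (jump-discontinuous in $x$) family $\psi(x,\cdot)$ uniformly on the relevant shrinking neighborhood, for which a bracketing argument exploiting monotonicity/bounded variation in $x$ is needed; and (c) getting the constants exactly right, since the wrap-around correction has combinatorial factors ($2\pi$, $2$, signs depending on the arc) that are easy to misstate. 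A secondary issue is rigorously invoking consistency $\emean \konvP 0$: this follows from the general strong-consistency results cited in the introduction once uniqueness of $\mf_I(\pms)$ is assumed, but one should note that $\emean$ must be defined via a measurable selection and argue that with probability tending to one it falls in the chart around $0$ where the above calculus is valid.
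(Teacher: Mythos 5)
Your overall architecture (first-order condition for the M-estimator, consistency via Ziezold, CLT for the linear term $\bar X_n$, negligibility of the fluctuation of the antipodal counting term) is exactly the paper's route, and your case $(i)$ bookkeeping is correct. But there is a genuine error in case $(ii)$: the population score is \emph{not} $F_\pms'(x) = 4\pi\,G'(x)+o(G'(x))$. Writing out the derivative of the Fr\'echet function for small $x>0$ gives
\begin{equation*}
\tfrac12 F_\pms'(x) \;=\; x - \EE[X] - 2\pi\,\Prb(X< x-\pi)
\;=\; x - 2\pi\int_0^x f(-\pi+s)\,\dif s
\;=\; 2\pi\,G(x) + o(G(x)x),
\end{equation*}
because the density deficit $G'$ enters only through its integral over the wrap-around arc of length $x$, and $\int_0^x G'(s)\,\dif s = G(x)$. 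The quantity $f(-\pi)$ (resp.\ $G'$) governs the \emph{second} derivative $F_\pms''(x)=2\bigl(1-2\pi f(x-\pi)\bigr)$; you have conflated the first and second derivatives. This is not a harmless typo, since you then devote the final step to converting a limit for $G'(|\emean|)$ into one for $G(|\emean|)$ via mean-value or convexity estimates or an inverse-function delta method. No such conversion is possible: $G(|\emean|)$ and $G'(|\emean|)$ are not asymptotically comparable (for $G(\epsilon)=\epsilon^{r+1}/c$ they differ by a factor of order $|\emean|\to 0$), so that step of your argument would fail.

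Once the identity is corrected, the difficulty you flag evaporates and the proof closes as in the paper: the empirical estimating equation reads $0=\emean-\bar X_n-\tfrac{2\pi}{n}\#\{X_j<\emean-\pi\}$, the counting term equals its mean $\tfrac{\emean}{2\pi}-G(\emean)+o(G(\emean)\emean)$ up to a Bernoulli fluctuation of order $O_\Prb(\sqrt{|\emean|/n})$ (your bracketing argument is an acceptable, if heavier, substitute for this direct variance bound), and the leading terms $\emean$ cancel, leaving $\sqrt{n}\,2\pi\,\sign(\emean)G(|\emean|)=\sqrt{n}\,\bar X_n+o_\Prb(1)\konvD\NC(0,\sigma^2)$ directly. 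Your remaining concerns (uniformity of the $o(G')$ remainder at the random point, measurable selection, consistency) are legitimate but are handled by the consistency $\emean=o_\Prb(1)$ together with the pointwise expansion holding uniformly on $(0,\delta)$ by hypothesis.
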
	
	
    Similar to the distributional limit for empirical extrinsic means (\Cref{prop:CLT-extrinsic-sphere}), here the asymptotic variance increases as the antipodal density $f(-\pi)$ tends to $\frac{1}{2\pi}$. Further, if the density attains the value  $\frac{1}{2\pi}$ and is sufficiently regular near the antipode, the convergence rate can be slower than parametric, exemplified by the following two types of smeariness.

    \begin{example}[Power smeariness] \label{ex:power_smeary}
     For $r>0$ consider the probability measure, continuous with respect to arc length,
  \begin{align*} \dif\pms_{\textup{pow},r}(x) =& \frac{(\pi -1)r + \pi}{\pi r + \pi}\mathds{1}_{[-1/2,1/2]}(x) \dif x +  \frac{1}{2\pi}\cdot \Big(  1- \big(\pi - x\big)^r  \Big)\mathds{1}_{[\pi-1, \pi)}(x)\dif x \\
  &+ \frac{1}{2\pi}\cdot \Big(  1- \big(\pi+x\big)^r\Big)\mathds{1}_{[-\pi,-\pi+1]}(x)\dif x\,.  
\end{align*}
Symmetry and \citet[Proposition 1]{hotz2015intrinsic} yield a unique intrinsic population mean at $\mf_I=\{ 0\}$ with $G(\epsilon) = {\epsilon^{r+1}/(2\pi (r+1))}$ in $[0,1)$. Hence, by \Cref{prop:CLT_Circle}(ii), for intrinsic sample means $\emean$ based on $X_1, \dots, X_n \sim \pms_{\textup{pow},r}$, as $n\to \infty$, it follows that $$n^{1/(2r+2)}  \emean$$ has a nontrivial limiting distribution, so that $\pms_{\textup{pow},r}$ is \emph{power smeary of order} $r$. Notably, the special case of power smeariness with \emph{integer order} $r\in \{0,1,2,\ldots\}$ was also derived by \cite{hotz2015intrinsic}.
\end{example}

    \begin{example}[Logarithmic smeariness]\label{cor:log-smeary}
 For $r>0$ and suitable $c_r>0$ consider the probability measure, continuous with respect to arc length, 
  \begin{align*}
    \dif\pms_{\textup{log},r}(x) =& \;c_r\cdot \mathds{1}_{[-1/2, 1/2]}(x)\dif x + \mathds{1}_{[\pi-1/2,\pi)}(x)\,\frac{\max\left\{1 - G_r'(\pi-x) , 0\right\}}{2\pi}\,\dif x\\
    &+\mathds{1}_{[-\pi,-\pi+1/2]}(x)\,\frac{\max\left\{1 - G_r'(\pi +x ) ,0\right\}}{2\pi}\,\dif x\,,
  \end{align*}
 where $G_r(x) =  \exp(-1/|x|^r)\Indicator{(0,\infty)}(x)$ is a smooth function.
 Symmetry and \citet[Proposition 1]{hotz2015intrinsic} imply the intrinsic population mean to be unique $\mf_I=\{ 0\}$. Then, by the strong law of \cite{ziezold1977expected} and \Cref{prop:CLT_Circle}(ii) it follows (see ``Proof of Example \ref{cor:log-smeary}'' in the appendix for a complete argument) for the intrinsic sample means $\emean$ based on $X_1, \dots, X_n \sim \pms_{\textup{log},r}$, as $n\to \infty$, that we have a threefold limiting behavior
 \begin{enumerate}
    \item[$(i)$] $\emean\xrightarrow{\rm a.s.}  \;0$,
    \item[$(ii)$] $  \left(\log \sqrt{n}\right)^{1/r} \emean \xrightarrow{\;\;\mathcal{D}\;\ }Z$ with $Z \sim \frac{1}{2}( \delta_{-1} + \delta_{1})$,
    \item[$(iii)$] $   r\,\left(\log\sqrt{n}\right)^{(1+r)/r}  \Big( \emean -\frac{\sign{(\emean)}}{\left(\log\sqrt{n}\right)^{1/r}}  \Big) \xrightarrow{\;\;\mathcal{D}\;\ }\sign{(Z')}\cdot \log{|Z'|}$ with $Z' \sim \mathcal{N}\left(0,\frac{\EE[X_1^2]}{4\pi^2}\right) $.
   \end{enumerate}
\end{example}

\paragraph*{The Sphere} of dimension $m \geq 2$, recall \Cref{subsec:extrinsicMeans}, is now viewed as the Riemannian immersion $\SSS^m =\{x\in \RR^{m+1} \,|\, \norm{x}_2 = 1\}$, 
 inducing the (spherical) intrinsic metric $d_{I}(x,y) = \arccos^{-1}\langle x, y \rangle$ for $x,y \in \SSS^m$. Further, with the standard unit column vectors $e_1, \dots, e_{m+1}$ of $\RR^{m+1}$, $\xi = e_1$ denotes the \emph{north pole}, coming with the \emph{inverse exponential chart}
\begin{align*}
\exp_{\xi}^{-1} \colon \SSS^m\backslash\{-\xi\}\to \RR^{m}, \quad x \mapsto (e_2, \dots, e_{m+1})^T(x- \langle x,\xi\rangle \xi) \frac{\arccos\langle x,\xi\rangle }{\norm{x-\langle x,\xi\rangle \xi}}\,.
\end{align*}
The topic of uniqueness of intrinsic means on the multivariate sphere is not as thoroughly studied as for the univariate setting. An important observation in this context is that if the underlying probability measure $\pms$ is rotationally symmetric on $\SSS^m$, then by \citet[Example 2.2]{bhattacharya2003large} the intrinsic mean $\mf_I(\pms)$ will be the union of parallel $(m-1)$-dimensional spheres or poles of the rotation axis. 

An example of a family of distributions close to the regime of non-unique intrinsic means is detailed in Section 4 of \cite{eltzer2019_smearyCLT}. They considered the parametric family of probability distributions $\pms_\alpha \in\PC(\SSS^m)$ indexed by $\alpha \in (0,1)$, where $\pms_\alpha$ is the mixture of a uniform distribution on the lower half-sphere $\mathbb{L}^m = \{ x \in \SSS^m \,|\, x_1 \leq 0\}$ weighted by $\alpha$ and a Dirac measure at the north pole $\xi$ weighted by $(1-\alpha)$. Then, according to \citet[Lemma 4.1 and its proof]{eltzer2019_smearyCLT} it follows that the intrinsic mean is given by $\mf_I(\pms_\alpha) =\{\xi\}$ if and only if 
$$\alpha \leq  \frac{1}{1+\gamma_m} \quad \text{with} \quad \gamma_m\coloneqq \frac{\sqrt{\pi}}{2} \frac{\Gamma(\frac{m+1}{2})}{\Gamma(\frac{m+2}{2})},$$
where $\Gamma \colon (0, \infty) \to (0, \infty)$ denotes the Gamma-function. 
They obtain this characterization by explicitly computing the Fr\'echet population function. 
In particular, for $\alpha < 1/(1+\gamma_m)$ the Hessian of the Fr\'echet population function parametrized in the exponential chart at the north pole is positive definite, whereas at the boundary to non-uniqueness, $\alpha = 1/(1+\gamma_m)$, all second and third order derivatives vanish while the fourth-order derivative forms a diagonal fourth-order tensor with positive entries. The following result establishes the distributional limit for intrinsic sample means for the different regimes in $\alpha$; it is a consequence of \citet[Theorems 2.11 and 4.3]{eltzer2019_smearyCLT}.

\begin{example}[CLT intrinsic spherical mean]
       Take $\pms_\alpha \in\PC(\SSS^m)$ as above for $m\geq 2$ and $\alpha \geq 1/(1+\gamma_m)$. Further, for $n\in \NN$ consider i.i.d.\ random variables $X_1, \dots, X_n \sim \pms_\alpha$ and a measurable selection of intrinsic sample means $\emean  \in \hat \mf_I(\ems)$. 
	\begin{itemize}
	 \item[(i)]  In case $\alpha < 1/(1+\gamma_m)$, it follows  as $n \to \infty$ that
	 $$    \sqrt{n} \, \exp_\xi^{-1}(\emean)  \xrightarrow{\mathcal{D}} \mathcal{N}\left(0,\Sigma(\alpha) \right), $$
  where $\Sigma(\alpha) \in \RR^{m\times m}$ is a symmetric positive definite matrix with identical eigenvalues $\lambda_1(\alpha) = \dots = \lambda_m(\alpha)$ which  fulfill $\lambda_1(\alpha) \asymp (1-\alpha (1+ \gamma_m))^{-2}$ for $\alpha \nearrow 1/(1+ \gamma_m)$. 
     \item[(ii)] In case $\alpha = 1/(1+\gamma_m)$, it follows  as $n \to \infty$ that
    $$  n^{-1/6}\, \exp_\xi^{-1} (\emean) \xrightarrow{\mathcal{D}} \mathcal{H}\,, $$
    where $\mathcal{H} = (\mathcal{H}_1, \dots, \mathcal{H}_m)$ is a real-valued random variable such that $(\mathcal{H}_1^3, \dots, \mathcal{H}_m^3)$ is distributed according to centered multivariate normal with positive definite covariance. 
	\end{itemize}
\end{example}

The asymptotic fluctuation aligns with our main result (\Cref{thm:minimaxLower}) and confirms a significant (worsening of the variance) deterioration of constants as the parameter $\alpha$ approaches $1/(1+\gamma_m)$. In particular, at the boundary $\alpha = 1/(1+\gamma_m)$ a slower than parametric convergence rate manifests. In the language of  \Cref{ex:power_smeary} this corresponds to power smeariness of order $r = 2$. 

Notably, by Stirling's formula, it follows for $m \to \infty$ that $\gamma_m \to \infty$, which suggests that in high-dimensional settings issues related to non-uniqueness will occur with even a very small contribution of mass near the antipode of the north pole. This aligns with the well-known \emph{curse of dimensionality}, which suggests that estimation of parameters becomes increasingly challenging as the ambient dimension increases. Moreover, on high dimensional spheres, non-unique intrinsic population means and the associated deterioration in estimation capabilities is not just limited to assuming mass antipodal to the mean, but can also occur if the underlying probability measure is not concentrated in a half sphere. For a detailed account on this and resulting effects of smeariness we refer to \cite{eltzner2022geometrical}. 

\subsection{Procrustean Means for Shape Spaces} 

Procrustes means have been introduced by \cite{Gow} for landmark based shape spaces. The latter describe geometrical objects in $\RR^m$ determined by placing $m+1\leq k\in \NN$ landmarks at corresponding locations on each object, amounting to data matrices in $\RR^{m\times k}$, modulo the action of a suitable group. For the group of similarity transformations, accounting for translation amounts to data matrices in $\RR^{m\times (k-1)}$, accounting for size yields unit size data matrices in $\SSS^{m\times (k-1)-1} \subset \RR^{m\times (k-1)}$ and, accounting for common rotation of all landmarks by the group $SO(m)$ of rotations in $\RR^m$ yields the \emph{shape space}
$$\Sigma_m^k := \{[x]: x \in \SSS^{m\times (k-1)-1}\},\quad [x]= \{gx : g\in SO(m)\}$$
of equivalence classes, as detailed, e.g. in \cite{dryden2016statistical}. The \emph{Procrustes metric}
$$ d_P([x],[y]) := \min_{g\in SO(m)} d_R(x,gy),\quad x,y \in \SSS^{m\times (k-1)-1}\,,$$
usually considered, is the quotient metric from the residual quasi-metric
$$ d_R(x,y) = \sqrt{1 - {\rm tr}(x^Ty)^2},\quad x,y \in \SSS^{m\times (k-1)-1}\,,$$
(e.g. \cite{H_meansmeans_12}); it is symmetric, satisfies the triangle inequality but $d_R(x,-x)=0$. The corresponding Fr\'echet $\rho$-means $\mathbb M(P)$, $P\in \mathcal P(\Sigma_m^k)$, $\rho = d^2_P$, are called \emph{Procrustes means}. Means similarly constructed with respect to different group actions, are \emph{Procrustean}.

The planar shape spaces $\Sigma_2^k$ carry the canonical structure of complex projective spaces $\mathbb C P^{k-2}$, Riemannian manifolds of real dimension $2(k-2)$, inducing the Procrustes metric. For higher dimensions ($m>2$) the shape spaces are stratified spaces, nonmanifolds comprising manifolds of varying dimensions with highest dimensional manifold stratum $\Sigma^*$ which is open and dense in $\Sigma_m^k$, due to the \emph{principal orbit theorem} \citep[Chapter IV]{Bre72}. Notably, while the \emph{manifold stability theorem} (means of random shapes assuming $\Sigma^*$ with positive probability lie on $\Sigma^*$ themselves, see \cite[Corollary 1]{H_meansmeans_12}) does not hold for Procrustes means in general \citep[Section 4.4]{H_meansmeans_12},  it seems to hold for most applications. Moreover \citep[Theorem 3]{H_meansmeans_12}, for every $x \in [x]\in \Sigma^*$ there is a measurable \emph{lift} $\psi_x: \Sigma^* \to \SSS^{m\times (k-1)-1}$ \emph{in optimal position to $x$}, i.e.
$$[\psi_x([x'])] = [x']\mbox{ and }d_P([x],[x']) = d_R(x,\psi_x([x']))\mbox{ for all }[x'] \in \Sigma^* \,,$$
that is locally smooth near $[x]$. 
Hence, a random shape $Z\sim P \in {\mathcal P}(\Sigma_m^k)$ a.s. assuming $\Sigma^*$  can be lifted to $\SSS^{m\times (k-1)-1}$ in optimal position to $\xi \in \zeta \in \mathbb M(P)$, if $\mathbb M(P) \subset \Sigma^*$ -- this is the case for most realistic applications. 
Since for any other $\xi' \in \SSS^{m\times (k-1)-1}$ and random $X := \psi_{\xi}(Z)$ on $\SSS^{m\times (k-1)-1}$,
\begin{align*}
\EE[d_R(\xi',X)^2] &\geq \EE[d_R(\xi',\psi_{\xi'}([X]))^2] = \EE[d_P([\xi'],[X])^2] 
\\&\geq \EE[d_P([\xi],[X])^2] = \EE[d_R(\xi,X)^2]\,,
\end{align*}
$\xi$ is a Fr\'echet $\rho$-mean (with $\rho = d^2_R$) of $\psi_\xi\sharp P$, called a \emph{residual mean}.

Vectorizing via 
\begin{align*}
    \SSS^{m\times (k-1)-1} &\to  \SSS^{m(k-1)-1}\\ (x_1,\ldots,x_j) =x &\mapsto {\rm vec}(x) = (x_1^T,\ldots,x^T_k)^T\,,
\end{align*}
due to $\EE[d_R^2(x,X)] = 1- {\rm vec}(x)^T\EE[{\rm vec}(X){\rm vec}(X)^T]{\rm vec}(x)$,
the residual mean set of a random configuration matrix $X \in \SSS^{m\times (k-1)-1}$ is nonempty and given by the inverse vectorization of the intersection of $\SSS^{m\times (k-1)-1}$ with the eigenspace to the largest eigenvalue of $\EE[{\rm vec}{(X)\rm vec}(X)^T]$. 

Finally, with the tangent space $T_\xi \SSS^{m\times (k-1)-1} = \{y\in \RR^{m\times (k-1)}: {\rm tr}(y^T\xi) =0\}$ of $\SSS^{m\times (k-1)-1}$ at $\xi\in \SSS^{m\times (k-1)-1}$, introduce a local chart of $\SSS^{m\times (k-1)-1}$ near $\xi$ via
$$\phi_\xi : x \mapsto \frac{x - {\rm  tr}(x^T\xi) \xi}{{\rm  tr}(x^T\xi)}$$
for $x$ in the hemisphere determined by ${\rm  tr}(x^T\xi)>0$. Notably, if $\phi_\xi(x)=y$, then $x=\frac{y+\xi}{\|y+\xi\|}$.

\begin{proposition}[CLT Procrustes means]\label{prop:Procrustes} Let $Z_1,\ldots,Z_n, Z \iid  P\in {\mathcal P}(\Sigma_m^k)$ be random shapes, $2\leq m < k$, that a.s. assume $\Sigma^*$ and suppose that $\bar P \in\Sigma^*$ is the unique Procrustes mean of $P$. Further suppose that $\bar P_n$ is a measurable selection of the 
Procrustes mean of the empirical law $P_n = \frac{1}{n}\sum_{i=1}^n \delta_{Z_i}$. Then with $\xi \in \bar P$, the above local chart $\phi_{\xi}$ and the measurable lift $\psi_{\xi}$, smooth near $\xi$, from above, we have
$$ \sqrt{n}\, {\rm vec}\left(\phi_{\xi} \circ \psi_{\xi} (\bar P_n)\right) 
 \xrightarrow{\mathcal{D}} {\mathcal N}\left(0,\,H^{-}DH^{-}\right)$$
with $N=m(k-1)$,  $D=\cov[{\rm vec}(\psi_\xi(Z)){\rm vec}(\psi_\xi(Z))^T] = \sum_{J=1}^{N+1}\lambda_j v_j$, $\lambda_1 > \lambda_2 \geq \ldots \geq \lambda_{N+1}$, ${\rm vec}(\xi)=v_1,\ldots,v_{N+1} \in \SSS^N$, and 
$H^-= \frac{1}{2} \sum_{j=2}^{N+1}(\lambda_1 - \lambda_j)^{-1} v_jv_j^T$.
\end{proposition}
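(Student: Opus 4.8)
The plan is to reduce the statement, via the lift $\psi_\xi$ and the chart $\phi_\xi$, to a central limit theorem for the empirical residual mean on the sphere $\SSS^N = \SSS^{m(k-1)-1}$, i.e.\ to a perturbation statement for the top eigenvector of an empirical second-moment matrix. First I would record that, since $P_n \to P$ weakly and $\bar P$ is the unique Procrustes mean in the open dense stratum $\Sigma^*$, consistency of the empirical Procrustes mean (Ziezold-type strong law, as invoked in \Cref{cor:log-smeary}) places $\bar P_n$ in $\Sigma^*$ and near $\bar P$ for $n$ large, a.s.; hence $\psi_\xi$ is defined and smooth at $\bar P_n$ eventually, and $X_i := \psi_\xi(Z_i)$ are i.i.d.\ on $\SSS^N$ with $\xi$ a residual mean of $\psi_\xi\sharp P$. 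By the vectorization identity $\EE[d_R^2(x,X)] = 1 - {\rm vec}(x)^T \EE[{\rm vec}(X){\rm vec}(X)^T]{\rm vec}(x)$, the residual mean of the empirical law corresponds to a unit vector in the top eigenspace of the empirical matrix $\hat D_n := \frac1n\sum_i {\rm vec}(X_i){\rm vec}(X_i)^T$, and analogously ${\rm vec}(\xi)=v_1$ spans the top eigenspace of $D$. The uniqueness assumption forces the spectral gap $\lambda_1 > \lambda_2$, so the top eigenvector is locally a smooth function of the matrix.

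Next I would apply the multivariate CLT to the i.i.d.\ matrices ${\rm vec}(X_i){\rm vec}(X_i)^T$: $\sqrt n(\hat D_n - D) \konvD \mathcal{N}$ in the space of symmetric $(N+1)\times(N+1)$ matrices, with covariance structure governed by $D=\cov[{\rm vec}(\psi_\xi(Z)){\rm vec}(\psi_\xi(Z))^T]$ (here I would note the identity $\EE[{\rm vec}(X){\rm vec}(X)^T] = D$ on the relevant subspace since ${\rm vec}(X)$ has unit norm, so the second moment and the covariance coincide up to the rank-one piece $v_1v_1^T$ which does not affect the eigenvector direction). Then I invoke the standard first-order eigenvector perturbation expansion: writing $\hat v_1(\hat D_n)$ for the (sign-normalized) top unit eigenvector, one has
$$ \hat v_1(\hat D_n) - v_1 = H^{-}\,(\hat D_n - D)\,v_1 + o_{\Prb}(n^{-1/2}),\qquad H^- = \tfrac12\sum_{j=2}^{N+1}(\lambda_1-\lambda_j)^{-1} v_j v_j^T, $$
which is exactly the reduced resolvent / pseudo-inverse appearing in the statement. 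Combining with the delta method for the smooth map $\phi_\xi$ (which fixes $\xi$, i.e.\ $\phi_\xi(\xi)=0$, and whose differential at $\xi$ is the orthogonal projection onto $T_\xi\SSS^N$, already accounted for by the fact that $(\hat D_n-D)v_1$ lies essentially in $v_1^\perp$) and with ${\rm vec}$ being linear, Slutsky's theorem yields $\sqrt n\,{\rm vec}(\phi_\xi\circ\psi_\xi(\bar P_n)) \konvD \mathcal{N}(0, H^- D H^-)$, the sandwiched form arising because the linearization is $v \mapsto H^- M v_1$ applied to a Gaussian $M$ whose covariance is read off from $D$.

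The main obstacle I expect is the justification of the eigenvector linearization in the present degenerate geometry: $\hat D_n$ and $D$ are both rank-deficient (their top eigenvector is constrained to $\SSS^N$, and ${\rm vec}(X_i)$ are unit vectors, so the whole spectrum lives on a sphere of matrices), and $\psi_\xi$ is only a \emph{measurable} lift that is merely locally smooth near $\xi$ — so one must carefully argue that with probability tending to one the empirical residual mean is the \emph{unique} top eigenvector near $v_1$, that the selection $\bar P_n$ actually realizes it (not some other spurious minimizer on a lower stratum or a far-away eigenvector), and that the $o_{\Prb}(n^{-1/2})$ remainder is genuinely negligible uniformly. This is handled by a localization argument: restrict to the high-probability event $\{\|\hat D_n - D\| \le \delta_n\}$ with $\delta_n\to0$, on which the gap $\lambda_1-\lambda_2$ persists, invoke analytic perturbation theory (Kato) for the isolated simple-at-the-top eigenvalue to get the smooth dependence and the explicit first-order term $H^-(\hat D_n-D)v_1$, and use the established consistency of $\bar P_n$ to match the abstract eigenvector with the Procrustes-mean selection. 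Once this identification is in place, the remaining steps (CLT for i.i.d.\ matrices, delta method, Slutsky) are routine.
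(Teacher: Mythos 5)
Your overall strategy coincides with the paper's: localize via the strong law so that $\bar P_n$ lies in $\Sigma^*$ near $\bar P$, lift to the pre-shape sphere, identify the mean with the top eigenvector of a second-moment matrix, and linearize around $v_1=\mathrm{vec}(\xi)$ (and you correctly observe that $\phi_{v_1}$ is first-order the identity on $T_{v_1}\SSS^N$). Where you differ is the linearization device: the paper writes the empirical Fr\'echet function in the chart as $G_n(y)=1-\frac{(y+v_1)^T}{\|y+v_1\|}D_n\frac{y+v_1}{\|y+v_1\|}$, Taylor-expands its gradient, invokes the CLT of Eltzner--Huckemann, and computes $\mathrm{Hess}_yG(0)=2\sum_{j\ge2}(\lambda_1-\lambda_j)v_jv_j^T$ explicitly, so that $H^-$ appears as the generalized inverse of this Hessian; you instead use Kato-type eigenvector perturbation plus the delta method. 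These are morally the same computation, and your route is arguably more self-contained.

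Two points do not close as written. First, the identification of $\psi_\xi(\bar P_n)$ with the top eigenvector of $\hat D_n=\frac1n\sum_i\mathrm{vec}(\psi_\xi(Z_i))\mathrm{vec}(\psi_\xi(Z_i))^T$ is not exact: $\bar P_n$ minimizes the \emph{quotient} Fr\'echet function $\xi'\mapsto\frac1n\sum_i d_R(\xi',\psi_{\xi'}(Z_i))^2$, in which the group element (hence the lift) is re-optimized for every candidate $\xi'$, whereas your $\hat D_n$ freezes the lifts in optimal position to the fixed population representative $\xi$. The paper forms its matrix from lifts in optimal position to the empirical mean itself, for which the eigenvector identification is exact; in your setup one needs an envelope-type argument to show the discrepancy between the two minimization problems is $o_{\Prb}(n^{-1/2})$. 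You flag a matching issue, but attribute it to measurability and lower strata rather than to this re-optimization of the group element. Second, the constants do not come out: the reduced resolvent in the first-order eigenvector expansion is $\sum_{j\ge2}(\lambda_1-\lambda_j)^{-1}v_jv_j^T=2H^-$, not $H^-$, and the covariance of the linear statistic $(\hat D_n-D)v_1=\frac1n\sum_i\bigl((X_i^Tv_1)X_i-\EE[(X^Tv_1)X]\bigr)$ is $\Cov[(X^Tv_1)X]$, which is not $D/4$; so ``reading the covariance off from $D$'' does not by itself produce the sandwich $H^-DH^-$, and your parenthetical about second moment versus covariance of $\mathrm{vec}(X)$ does not address this. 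In the paper the factor of $2$ in the Hessian is paired with the factor of $2$ in the score, and the middle matrix of the sandwich is the covariance of the gradient of the squared residual distance; if you carry your expansion through you must track these normalizations explicitly rather than asserting the form of the limit.
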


The proof is deferred to the appendix. 
Notably, as detailed in that proof, the columns of the generalized inverse  $H^-$ of the Hessian matrix span the tangent space of $\Sigma^*$ at $\bar P$.  When the Procrustes mean ceases to be unique, e.g. if $\lambda_2 \to \lambda_1$, due to symmetries in the distribution $P$, say, then the ``constants above deteriorate''.

\subsection{Diffusion Means on Manifolds} 

Notably in this section, $t$ denotes (diffusion) time, and not perturbation scale as in the rest of this paper.

Extending upon the idea of means on manifolds which shall be independent of the embedding, \cite{hansen2021diffusion} and \cite{eltzner2023diffusion} recently proposed a concept of a mean on geodesically and stochastically complete, connected Riemannian manifolds. The so-called \emph{diffusion mean} $\mf_{D,t}(P)$ for diffusion time $t$ are defined as maximum likelihood estimates for the heat kernel, the unique symmetric solution of the heat equation for a point mass as initial condition at time $t=0$. Diffusion means for time $t$ are location statistics that can be treated in the framework of Fr\'echet $\rho$-means by selecting $\rho_t(x,y)$ as the negative log-likelihood at point $y$ of a Brownian motion process at time $t$ started at $x$. For diffusion time $t\to 0$ the diffusion mean set converges to a subset of the intrinsic mean set. It is an open question whether there are examples where this subset is proper. For circles, spheres and real projective spaces the limit for $t\to \infty$ yields a subset of the extrinsic mean set, as recently shown by \cite{Duesberg2024thelong}, and, he provided examples where this subset is proper.

On $\mathbb{R}^m$, $\SSS^m$ and hyperbolic spaces $\mathbb{H}^m$ (see \cite{lee2018introduction}), diffusion means are honest, since in these cases the log-likelihood $\rho_t(x,y)$ can be expressed for every $t>0$ as 
a monotone decreasing function 
of the geodesic distance 
as shown by \cite{alonso2021pointwise}. It seems reasonable to expect that similar monotonicity holds for compact symmetric spaces yielding honest diffusion means, for instance, on real and complex projective spaces and on Lie groups. As such, they can be treated in the framework of \Cref{sec:main}.

In~general, diffusion means for fixed values of $t$ can exhibit smeariness and non-uniqueness similarly to intrinsic means. 
However, for increasing $t$, smeariness and non-uniqueness seems less likely, as observed, when diffusion means approach extrinsic means. 
Especially, Theorem 4.8 in \cite{eltzner2023diffusion} shows that simultaneous estimation of the diffusion mean  $\mf_{D,t}(P)$ and the diffusion time $t$ restricts the possibility of full smeariness to a very restricted set of probability measures and allows at most for directional smeariness otherwise. 
Moreover, numerical simulations also suggest that effects of finite sample smeariness for a given data set, i.e., a large variance modulation \eqref{eq:varMod}, are mitigated by additionally optimizing over the time $t$. An 
explanation for this behavior
is given by 
\Cref{thm:minimaxLower} (there $t$ denotes perturbation scale not diffusion time): As the set of probability distributions exhibiting non-unique diffusion means is closely tied to the time parameter $t>0$, 
optimizing over $t$ 
makes it less likely to approach the regime of non-uniqueness. 

While diffusion means have only been recently developed with many fundamental properties still open, the above consideration show that diffusion means may also suffer from deteriorating constants and lower rates, leaving the  precise picture open for future research.

\subsection{Wasserstein Barycenters on Euclidean Spaces}

For the space of Euclidean probability measures $\PC_2(\RR^m)$ with finite second moment 
we use the $2$-Wasserstein distance $\WC$, defined for two probability measures $\mu, \nu\in \PC_2(\RR^m)$ as 
\begin{align*}
    \WC(\mu, \nu) \coloneqq \left(\inf_{\pi\in \Pi(\mu, \qms)} \int \norm{x-y}^2 \dif \pi(x,y)\right)^{1/2},
\end{align*}
where $\Pi(\mu, \qms)$ denotes the collection of probability measures on $\RR^{2m}$ whose marginals on $\RR^m$ coincide with $\mu$ and $\qms$, i.e., 
\begin{align*}
    \Pi(\mu, \qms) \coloneqq \left\{  \pi\in \PC(\RR^{m}\times \RR^m) \;\bigg|\; \begin{array}{c}
         \pi(A\times \RR^m) = \mu(A) \quad \text{ for all } A \in \mathcal{B}(\RR^m)  \\
         \pi(\RR^m \times B) = \qms(B) \quad \text{ for all } B \in \mathcal{B}(\RR^m)
    \end{array}\right\}.
\end{align*}
Jointly, $(\XC, d) \coloneqq (\PC_2(\RR^m), \WC)$ defines the \emph{2-Wasserstein space} over $\RR^m$, see \citet{vil03, villani2008optimal}, \cite{panaretos2020an} for comprehensive monographs.  A notable feature of the Wasserstein metric is that it equips $\PC_2(\RR^m)$ with a geometry that is \emph{consistent} with the Euclidean geometry: For instance, the Wasserstein distance between two multivariate Gaussians of identical covariance coincides with the difference between their expectations.

The Wasserstein barycenter, denoted by $\mf_W$, is the Fr\'echet mean in the Wasserstein space for a probability measure $\pms$. Theoretical foundations on existence and uniqueness were first established by \cite{agueh2011barycenters} and extended by \cite{le2017existence}. Specifically, a Wasserstein barycenter exists if $\pms$ is a Borel measure that admits a finite second moment \citep[Theorem 2]{le2017existence}, i.e., if $\EE_{\mu\sim \pms}[\WC^2(\mu,\delta_{0})]<\infty$, denoted by $\pms\in\PC_2(\PC_2(\RR^m))$. Moreover, if $\pms$ is finitely supported and if one of the support points is a measure that is absolutely continuous with respect to the Lebesgue measure, then the Wasserstein barycenter is unique \citep[Propositions 2.3 and 3.5]{agueh2011barycenters}.  In contrast, if $\pms$ is supported on the collection of measures with finite and discrete support, uniqueness can fail, see \Cref{ex:non-uniqueMeans}.

Given i.i.d.\ random measures $\mu_1, \dots, \mu_n \sim \pms$ with corresponding empirical measure $\ems\coloneqq \frac{1}{n}\sum_{ i= 1}^{n} \delta_{\mu_i}$, \cite{le2022fast} established under certain structural assumptions on $\pms$ parametric convergence rates ($1/n$ for the expectation of the squared distance) of empirical Wasserstein barycenters $\emean \in \mf_{W}(\ems)$ to the respective population Wasserstein barycenter. To formalize their result, the following definition is necessary. 

\begin{definition}
A convex function $\varphi \colon \RR^m \to \RR$ is called \emph{$\alpha$-strongly convex} for $\alpha>0$ if it is differentiable and for every $x,y\in \RR^m$, 
\begin{align*}
    \langle \nabla \varphi(x), x-y\rangle \geq \varphi(x) - \varphi(y) + \frac{\alpha}{2}\norm{x-y}^2.
\end{align*}
Moreover, $\varphi$ is called \emph{$\beta$-smooth} 
for $\beta >0$ if it is differentiable and for every $x,y \in\RR^m$, 
\begin{align*}
    \langle \nabla \varphi(x), x-y\rangle \leq \varphi(x) - \varphi(y) + \frac{\beta}{2}\norm{x-y}^2.
\end{align*}
\end{definition}

These two regularity properties are deeply rooted in the optimization literature since an objective function which fulfills both will exhibit linear convergence rates of gradient descent methods \citep{karimi2016linear}. Notably, if a convex, differentiable function is $\alpha$-strongly convex and $\beta$-smooth, then necessarily $\alpha \leq \beta$. 

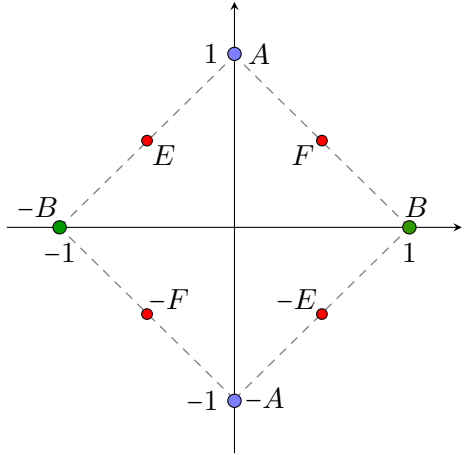
\begin{figure}[b!]
    \centering
\definecolor{ffqqqq}{rgb}{1.,0.,0.}
\definecolor{ttzzqq}{rgb}{0.2,0.6,0.}
\definecolor{qqzzqq}{rgb}{0.,0.6,0.}
\definecolor{xdxdff}{rgb}{0.49019607843137253,0.49019607843137253,1.}

\begin{tikzpicture}[line cap=round,line join=round,>=triangle 45,x=1.0cm,y=1.0cm]
\begin{axis}[
x=2.3cm,y=2.3cm,
axis lines=middle,
xmin=-1.3,
xmax=1.3,
ymin=-1.3,
ymax=1.3,
xtick={-1.0,0,1.0},
ytick={-1.0,0,1.0},]
\clip(-1.3,-1.3) rectangle (1.3,1.3);
\draw [gray, line width=0.5pt,dashed] (-1.,0.)-- (-0,0.9913578179545821);
\draw [gray, line width=0.5pt,dashed] (0.0,-0.9913578179545821)-- (1.,0.);
\draw [gray, line width=0.5pt,dashed] (1.,0.)-- (0,0.9913578179545821);
\draw [gray, line width=0.5pt,dashed] (0.0,-0.9913578179545821)-- (-1.,0.);
\draw (0.02,1.0) node[anchor=west] {$A$};
\draw (0,-0.985) node[anchor=west] {$-A$};
\draw (1.04,0) node[anchor=south] {$B$};
\draw (-0.95,0.0) node[anchor=south east] {$-B$};
\draw (-0.53,0.53) node[anchor=north west] {$E$};
\draw (0.53,-0.53) node[anchor=south east] {$-E$};
\draw (0.515,0.53) node[anchor=north east] {$F$};
\draw (-0.55,-0.53) node[anchor=south west] {$-F$};
\begin{scriptsize}
\draw [fill=xdxdff] (0.,1.) circle (2.5pt);
\draw [fill=xdxdff] (0.,-1) circle (2.5pt);
\draw [fill=qqzzqq] (-1.,0.) circle (2.5pt);
\draw [fill=ttzzqq] (1.,0.) circle (2.5pt);
\draw [fill=ffqqqq] (-0.5,0.5) circle (2.0pt);
\draw [fill=ffqqqq] (0.5,-0.5) circle (2.0pt);
\draw [fill=ffqqqq] (0.5,0.5) circle (2.0pt);
\draw [fill=ffqqqq] (-0.5,-0.5) circle (2.0pt);
\end{scriptsize}
\end{axis}
\end{tikzpicture}
    \caption{Depicting the points from \Cref{ex:non-uniqueMeans}: $E$ (resp. $F$) represents the midpoint between $A$ and $-B$ (resp.\ $A$ and $B$). The set of Wasserstein barycenters for the probability measure $\frac{1}{2}(\delta_{\frac{1}{2}(\delta_{A} + \delta_{-A})} + \delta_{\frac{1}{2}(\delta_{B} + \delta_{-B})})$ is given by $\{\frac{1}{2}(\delta_{E} + \delta_{-E}), \frac{1}{2}(\delta_{F} + \delta_{-F})\}$.}
    \label{fig:barycenter}
\end{figure}

\begin{theorem}[{\citealt[Corollary 16]{le2022fast}}, sufficiency of condition $\beta-\alpha<1$]\label{prop:barycentereConvergence}
    Let $\pms \in \PC_2(\PC_2(\RR^m))$ be a probability distribution with a unique Wasserstein barycenter $\mf_W(\pms) = \{\pmean\}$. Let $0<\alpha\leq \beta$ and suppose that every $\mu \in \supp(\pms)$ is the push-forward of $\pmean$ by the gradient of an $\alpha$-strongly convex and $\beta$-smooth function $\varphi$, i.e., $\mu = (\nabla \varphi)_{\#}\pmean$. Then, if $\beta-\alpha<1$, every empirical Wasserstein barycenter $\emean \in \mf_W(\ems)$ fulfills \begin{align*}
        \EE[ \WC^2(\emean, \pmean)] \leq 
        \frac{1}{n}\, \frac{4 \EE_{\mu \sim \pms}[\WC^2(\mu, \pmean)] }{(\beta - \alpha-1)^2}.
    \end{align*}
\end{theorem}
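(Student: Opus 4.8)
The plan is to treat $\emean$ as the empirical-risk minimizer of the Fréchet functional on the metric space $(\PC_2(\RR^m),\WC)$ and to run the now-standard two-ingredient argument of \cite{le2022fast}: (i) a quantitative \emph{variance inequality} giving quadratic growth of the population functional $F_\pms$ around its unique minimizer $\pmean$, with a curvature constant equal to $1-(\beta-\alpha)$; and (ii) a \emph{basic inequality} coming from $F_n(\emean)\leq F_n(\pmean)$, whose right-hand side is controlled by a first-order (Cauchy--Schwarz) linearization of $\WC^2$ in its Wasserstein-differentiable slot together with the first-order optimality of $\pmean$.

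\textbf{Step 1: variance inequality.} I would first show that, under the hypothesis, with $c \coloneqq 1-(\beta-\alpha) > 0$ one has $F_\pms(\nu) - F_\pms(\pmean) \geq c\,\WC^2(\nu,\pmean)$ for every $\nu\in\PC_2(\RR^m)$. For each $\mu\in\supp(\pms)$ the hypothesis provides a Brenier map $\nabla\varphi_\mu$ from $\pmean$ onto $\mu$ that is $\beta$-Lipschitz and whose inverse $\nabla\varphi_\mu^{*}$ is $\tfrac{1}{\alpha}$-Lipschitz; feeding these Lipschitz bounds into the convexity of $\WC^2(\cdot,\mu)$ along the generalized geodesic based at $\mu$ joining $\pmean$ to $\nu$ yields, for each $\mu$, a lower bound of the shape $\WC^2(\nu,\mu)\geq \WC^2(\pmean,\mu) + 2\langle \id-\nabla\varphi_\mu,\, T^{\pmean}_\nu-\id\rangle_{L^2(\pmean)} + c\,\WC^2(\nu,\pmean)$, with $T^{\pmean}_\nu$ the optimal map from $\pmean$ to $\nu$. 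Averaging over $\mu\sim\pms$ and invoking the first-order optimality of the barycenter, $\EE_{\mu\sim\pms}[\,\id-\nabla\varphi_\mu\,]=0$ in $L^2(\pmean;\RR^m)$, eliminates the linear term and proves the claim. The hypothesis $\beta-\alpha<1$ is used precisely here, to make $c$ strictly positive.

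\textbf{Step 2: basic inequality and conclusion.} Since $\emean$ minimizes $F_n$, $F_n(\emean)\leq F_n(\pmean)$; subtracting $F_\pms$ and rearranging gives $F_\pms(\emean) - F_\pms(\pmean) \leq \frac{1}{n}\sum_{i=1}^n g(\mu_i) - \EE_{\mu\sim\pms}[g(\mu)]$ with $g(\mu)\coloneqq \WC^2(\pmean,\mu) - \WC^2(\emean,\mu)$. A first-order comparison of $\WC^2(\cdot,\mu)$ at $\pmean$ (using that $\nabla\varphi_\mu$ is the optimal map and the geodesic semiconcavity / generalized-geodesic convexity of the squared Wasserstein distance) bounds $g(\mu)$, up to a controllable remainder, by $2\langle \id-\nabla\varphi_\mu,\, T^{\pmean}_{\emean}-\id\rangle_{L^2(\pmean)}$, so that the right-hand side above is at most $2\langle \xi_n,\, T^{\pmean}_{\emean}-\id\rangle_{L^2(\pmean)}$, where $\xi_n\coloneqq\frac{1}{n}\sum_i(\id-\nabla\varphi_{\mu_i}) - \EE_{\mu\sim\pms}[\id-\nabla\varphi_\mu]$ is a centered i.i.d.\ average; by Cauchy--Schwarz this is $\leq 2\,\norm{\xi_n}_{L^2(\pmean)}\,\WC(\emean,\pmean)$. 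Combining with Step 1 gives $c\,\WC^2(\emean,\pmean)\leq 2\,\norm{\xi_n}_{L^2(\pmean)}\WC(\emean,\pmean)$, hence $\WC(\emean,\pmean)\leq \tfrac{2}{c}\norm{\xi_n}_{L^2(\pmean)}$. Squaring, taking expectations, and using $\norm{\id-\nabla\varphi_\mu}_{L^2(\pmean)}^2=\WC^2(\mu,\pmean)$ together with $\EE\norm{\xi_n}_{L^2(\pmean)}^2\leq\tfrac{1}{n}\EE_{\mu\sim\pms}\WC^2(\mu,\pmean)$ yields $\EE[\WC^2(\emean,\pmean)]\leq\tfrac{4}{c^2 n}\EE_{\mu\sim\pms}[\WC^2(\mu,\pmean)]$, which is the claim since $c^2=(1-(\beta-\alpha))^2=(\beta-\alpha-1)^2$.

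\textbf{Main obstacle.} The crux is Step 1: turning the $\alpha$-strong convexity and $\beta$-smoothness of the Brenier potentials into the explicit quadratic lower bound with constant $1-(\beta-\alpha)$, via convexity of $\WC^2(\cdot,\mu)$ along generalized geodesics and the Lipschitz control on $\nabla\varphi_\mu$ and its inverse; this is exactly where $\beta-\alpha<1$ enters, forcing the local curvature of $F_\pms$ at $\pmean$ to be strictly positive (in the spirit of \Cref{thm:minimaxLower}, which shows that some such positivity is genuinely unavoidable for uniform rates). A second, more technical point is making the linearization in Step 2 fully rigorous: one must control the remainder, justify differentiability of $\WC^2(\cdot,\mu)$ at $\pmean$ (which holds since the optimal map $\nabla\varphi_\mu$ exists by hypothesis) and the existence of the maps $T^{\pmean}_{\emean}$ and $\nabla\varphi_\mu^{*}$, replacing maps by plans where necessary.
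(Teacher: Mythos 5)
This theorem is not proved in the paper: it is imported verbatim from \citet[Corollary 16]{le2022fast}, as the theorem header indicates, so there is no in-paper argument to compare against. Your sketch is a reconstruction of the proof in that cited source, and it captures its two genuine ingredients correctly: (i) the quantitative ``hugging''/variance inequality $\WC^2(\nu,\mu)\geq \WC^2(\pmean,\mu)-2\langle T^{\pmean}_{\nu}-\id,\nabla\varphi_\mu-\id\rangle_{L^2(\pmean)}+(1-(\beta-\alpha))\,\WC^2(\nu,\pmean)$, derived from the $\beta$-Lipschitz Brenier map and its $\alpha^{-1}$-Lipschitz inverse, with the linear term killed by the barycenter's first-order optimality $\EE_{\mu\sim\pms}[\nabla\varphi_\mu]=\id$; and (ii) the basic inequality $F_n(\emean)\leq F_n(\pmean)$ plus Cauchy--Schwarz against the centered i.i.d.\ average $\xi_n$, whose second moment is $\tfrac1n\EE_{\mu\sim\pms}[\WC^2(\mu,\pmean)]$. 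This yields exactly $\WC(\emean,\pmean)\leq \tfrac{2}{c}\norm{\xi_n}_{L^2(\pmean)}$ with $c=1-(\beta-\alpha)$ and hence the stated constant $4/(c^2 n)$.

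One bookkeeping point in your Step 2 deserves care. If you literally pass through the population functional, writing $F_\pms(\emean)-F_\pms(\pmean)\leq \frac1n\sum_i g(\mu_i)-\EE[g(\mu)]$ and then linearizing both the empirical sum \emph{and} the population expectation, the remainders do not cancel: upper-bounding $-\EE[g(\mu)]$ requires the nonnegative-curvature \emph{upper} bound (coefficient $1$ on $\WC^2(\emean,\pmean)$) while the empirical sum uses the hugging \emph{lower} bound (coefficient $c$), and the net quadratic term becomes $(2c-1)\WC^2$ rather than $c\,\WC^2$ --- which would only be useful for $\beta-\alpha<1/2$ and with a degraded constant. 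The clean route, which is what \cite{le2022fast} do and what your final displayed chain implicitly uses, is to apply the hugging inequality sample-wise inside the empirical basic inequality, $0\leq \frac1n\sum_i\bigl(\WC^2(\pmean,\mu_i)-\WC^2(\emean,\mu_i)\bigr)\leq 2\langle T^{\pmean}_{\emean}-\id,\xi_n\rangle - c\,\WC^2(\emean,\pmean)$, without ever subtracting $F_\pms$. With that fix, and the measure-theoretic caveats you already flag (replacing the optimal map $T^{\pmean}_{\emean}$ by a plan when $\pmean$ is not absolutely continuous), the sketch is a faithful outline of the cited proof.
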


The result by \cite{le2022fast} leaves open whether the requirement $\beta-\alpha<1$ is a genuine restriction or an artifact of their proof. Indeed, as $\beta-\alpha$ approaches one, the underlying constant blows up to infinity. To analyze necessary conditions on the difference $\beta-\alpha$ to guarantee parametric convergence rates of empirical Wasserstein barycenters we utilize \Cref{cor:effects_FSS} and construct a measure $\pms$ which leads to non-unique means.

\begin{example}[Non-unique Wasserstein barycenter]\label{ex:non-uniqueMeans}
Define the points in the two-dimensional Euclidean plane (see \Cref{fig:barycenter})\begin{align*}
    A = (0, 1)^t\quad\quad  B = (1,0)^t\quad \quad E \coloneqq (-1/2,1/2)^t \quad \quad F \coloneqq (1/2,1/2)^t 
\end{align*}
and define mixtures of Dirac measures,
\begin{align*}
\mu \coloneqq \frac{1}{2}\left(\delta_{A} + \delta_{-A}\right),\quad  \nu\coloneqq \frac{1}{2}\left(\delta_{B} + \delta_{-B}\right),\quad \pms \coloneqq \frac{1}{2}\left(\delta_{\mu} + \delta_{\nu}\right).
\end{align*}
Then, by \citet[Theorem 8]{le2017existence}
the Wasserstein barycenter is given by 
\begin{align*}
    \mf_W(\pms) = \left\{ \frac{1}{2}(\delta_{E} + \delta_{-E}), \frac{1}{2}(\delta_{F} + \delta_{-F}) \right\}.
\end{align*}
\end{example}

Necessary and sufficient conditions for the existence of an $\alpha$-convex and $\beta$-smooth function $\varphi$ with $\beta \geq \alpha > 0$ such that $\xi = \nabla \varphi_{\#}[\frac{1}{2}(\delta_{E} + \delta_{-E})]$ for $\xi \in \{\mu, \nu, \frac{1}{2}(\delta_{E} + \delta_{-E})\}$ were derived by \citet[Theorem 4]{taylor2017smooth}. Invoking their analysis and utilizing \Cref{cor:effects_FSS}, we arrive at the following proposition.

\begin{proposition}[Necessity of condition $\beta-\alpha \leq 2$ for fast rates]\label{lem:lowerbound_difference}
    In the setting of \Cref{ex:non-uniqueMeans}, the following assertions hold. 
    \begin{enumerate}
        \item[$(i)$] Every $\alpha$-strongly convex and $\beta$-smooth function $\varphi \colon \RR^2 \to \RR$ with $\beta > \alpha \geq 0$ such that $\mu = \nabla \varphi_{\#}[\frac{1}{2}(\delta_{E} + \delta_{-E})]$ fulfills $\beta-\alpha \geq 2.$ Further, if $\alpha$ is positive, then $\beta-\alpha >2$.
        \item[$(ii)$] For every $\delta>0$ there exists a choice for $\beta \geq \alpha >0$ with $\beta-\alpha \leq 2+ \delta$ such that for every $\xi \in \{\mu, \nu, \frac{1}{2}(\delta_{E} + \delta_{-E})\}$ there exists an $\alpha$-strongly convex and $\beta$-smooth function $\varphi$ which fulfills  $\xi = \varphi_{\#}[\frac{1}{2}(\delta_{E} + \delta_{-E})]$.
        \item[$(iii)$] For every estimator $\hat {\pms}\colon \bigcup_{n \in \NN} \PC_2(\RR^m)^n \to  \PC_2(\RR^m)$ and every sample $\mu_1, \dots, \mu_n$ of size $n\in\NN$ there exists a probability measure $P_{\hat {\pms},n}\in \{(1-t)\pms + t \delta_\xi \;|\; \xi  \in \mf(\pms), t \in (0,1)\}$ with a unique Wasserstein barycenter $\mf_W(P_{\hat {\pms},n})=\{\pmean_{\hat {\pms},n}\}$ such that 
        \begin{align*}
                   \mathbb{E}\left[ \WC^2(\hat  {\pms}(\mu_1, \dots, \mu_n),\pmean_{\hat {\pms},n})\right] \geq \diam{\mf_W(\pms)}^2/9 = 1/9. 
        \end{align*}
    \end{enumerate}
    \end{proposition}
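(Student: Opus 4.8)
The plan is to handle the three parts separately. Parts $(i)$ and $(ii)$ are purely convex-analytic statements about which maps of the support $\{E,-E\}$ can be realized by gradients of $\alpha$-strongly convex, $\beta$-smooth potentials, whereas part $(iii)$ is a direct application of \Cref{cor:effects_FSS} to the $2$-Wasserstein space once the diameter of $\mf_W(\pms)$ is computed.

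For part $(i)$ I would proceed as follows. If $\varphi$ is $\alpha$-strongly convex, $\beta$-smooth and $\nabla\varphi$ pushes $\frac{1}{2}(\delta_E+\delta_{-E})$ onto $\mu=\frac{1}{2}(\delta_A+\delta_{-A})$, then $\nabla\varphi(E)$ and $\nabla\varphi(-E)$ cannot coincide (otherwise the push-forward would be a Dirac), so $\{\nabla\varphi(E),\nabla\varphi(-E)\}=\{A,-A\}$. Monotonicity of $\nabla\varphi$ excludes the crossing assignment, since $\langle (-A)-A,\,E-(-E)\rangle=-4\langle A,E\rangle=-2<0$; hence $\nabla\varphi(E)=A$, $\nabla\varphi(-E)=-A$. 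Plugging $u=E$, $v=-E$ into the standard inequality for $\alpha$-strongly convex, $\beta$-smooth differentiable functions (equivalently, summing the two-point interpolation inequalities of \citet[Theorem 4]{taylor2017smooth} for the data $(E,A)$ and $(-E,-A)$, which cancels the function-value terms),
\[
\langle \nabla\varphi(u)-\nabla\varphi(v),\,u-v\rangle \;\geq\; \frac{\alpha\beta}{\alpha+\beta}\,\|u-v\|^2 + \frac{1}{\alpha+\beta}\,\|\nabla\varphi(u)-\nabla\varphi(v)\|^2 ,
\]
and using $\|E-(-E)\|^2=2$, $\|A-(-A)\|^2=4$, $\langle A-(-A),E-(-E)\rangle=2$, reduces this to $2\geq(2\alpha\beta+4)/(\alpha+\beta)$, i.e.\ $\alpha+\beta-\alpha\beta\geq 2$. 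An elementary computation then finishes: $\alpha\geq 1$ is incompatible with $\beta>\alpha$, and for $\alpha\in[0,1)$ one has $\beta\geq (2-\alpha)/(1-\alpha)=(\alpha+2)+\alpha^2/(1-\alpha)$, giving $\beta-\alpha\geq 2$ in general and $\beta-\alpha>2$ as soon as $\alpha>0$.

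For part $(ii)$ I would exhibit explicit quadratic potentials $\varphi(z)=\frac{1}{2}z^\top M z$. To hit $\mu$ one needs a symmetric positive-definite $M$ with $ME=A$; the solutions are $M=\bigl(\begin{smallmatrix} b & b\\ b & b+2\end{smallmatrix}\bigr)$ with $b>0$, whose eigenvalues are $\alpha=b+1-\sqrt{b^2+1}>0$ and $\beta=b+1+\sqrt{b^2+1}$, so that $\beta-\alpha=2\sqrt{b^2+1}\to 2$ as $b\to 0$ (consistent with $(i)$). To hit $\nu$ one uses $M=\bigl(\begin{smallmatrix} b+2 & b\\ b & b\end{smallmatrix}\bigr)$, which has the same spectrum and satisfies $ME=-B$; and $\frac{1}{2}(\delta_E+\delta_{-E})$ is hit by $\varphi=\frac{1}{2}\|\cdot\|^2$, which is $1$-strongly convex and $1$-smooth and hence also $\alpha$-strongly convex and $\beta$-smooth since $\alpha<1<\beta$ for $b$ small. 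Choosing $b=b(\delta)>0$ with $2\sqrt{b^2+1}\leq 2+\delta$ then produces a single pair $(\alpha,\beta)$ valid for all three targets. (Alternatively, one may invoke the sufficiency direction of \citet[Theorem 4]{taylor2017smooth}: for the symmetric data $(E,A),(-E,-A)$ the interpolation inequalities reduce exactly to $\alpha+\beta-\alpha\beta\geq 2$.)

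For part $(iii)$ I would verify \Cref{ass:nonUnique} on $(\PC_2(\RR^m),\WC)$ with $\rho=\WC^2$: honesty holds because $\rho(\cdot,\theta)=\WC^2(\cdot,\theta)$ is uniquely minimized at $\theta$ (as $\WC$ is a metric), and $\mf_W(\pms)$ is nonempty with exactly two elements by \Cref{ex:non-uniqueMeans}. Its diameter equals $\WC\bigl(\frac{1}{2}(\delta_E+\delta_{-E}),\frac{1}{2}(\delta_F+\delta_{-F})\bigr)$, and both deterministic couplings of these two-atom measures have transport cost $1$ since $\|E-F\|^2=\|E+F\|^2=1$; hence $\diam{\mf_W(\pms)}^2=1$. \Cref{cor:effects_FSS} then applies, and inspecting the proof of \Cref{thm:minimaxLower} the measure it produces has the required form $(1-t)\pms+t\delta_\xi$ with $\xi\in\mf_W(\pms)$, $t\in(0,1)$, and a unique barycenter by \Cref{lem:instability}, which yields the claimed bound $\geq\diam{\mf_W(\pms)}^2/9=1/9$. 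The routine part is $(iii)$; the main obstacles lie in $(i)$–$(ii)$, namely fixing the correct gradient assignment through monotonicity and then verifying that the extremal relation $\alpha+\beta-\alpha\beta=2$ is precisely the threshold $\beta-\alpha=2$ and is matched by the explicit quadratics.
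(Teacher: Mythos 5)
Your proposal is correct, and all three parts reach the same conclusions as the paper's proof; the overall strategy (the interpolation theory of \citet{taylor2017smooth} for $(i)$--$(ii)$, and \Cref{cor:effects_FSS} for $(iii)$) is also the same, but the execution of $(i)$ and $(ii)$ differs in ways worth noting. For $(i)$, the paper works directly with the two-point interpolation inequalities of \citet[Theorem 4]{taylor2017smooth}, eliminates the unknown function values by a WLOG sign argument on $\kappa=\tilde\varphi(E)-\tilde\varphi(-E)$, and rules out the crossed assignment $\nabla\varphi(E)=-A$ by showing the corresponding interpolation system is infeasible; you instead sum the two inequalities to get the standard strong-convexity/smoothness co-coercivity bound (which kills the function values automatically) and dispose of the crossed assignment by plain monotonicity of $\nabla\varphi$. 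Both routes land on the same constraint, $\alpha+\beta-\alpha\beta\geq 2$ (the paper's $\gamma\geq 2+(\beta-\gamma)(\beta-2)$ rearranges to exactly this), and your elementary analysis $\beta-\alpha\geq 2+\alpha^2/(1-\alpha)$ is cleaner than the paper's quadratic-formula description of the solution set and makes the strictness for $\alpha>0$ transparent. For $(ii)$, the paper invokes the existence direction of Taylor et al.\ with a specific admissible pair $(\gamma,\beta)$, whereas you exhibit explicit quadratic potentials $\varphi(z)=\tfrac12 z^{\top}Mz$ with $ME=A$ (resp.\ $ME=-B$) and compute the spectrum $(b{+}1)\pm\sqrt{b^2{+}1}$ directly; this is more constructive, verifies sharpness of the threshold $\beta-\alpha=2$ by inspection, and incidentally sidesteps a glitch in the paper's choice $\gamma=2+\max(\delta,1/2)$, which fails to satisfy $\beta-\alpha\leq 2+\delta$ when $\delta<1/2$. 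Part $(iii)$ is identical to the paper's: verify \Cref{ass:nonUnique}, compute $\diam{\mf_W(\pms)}^2=1$ from $\norm{E-F}^2=\norm{E+F}^2=1$, and apply \Cref{cor:effects_FSS}.
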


The above proposition highlights that the upper bound on the difference $\beta-\alpha<2$ is \emph{necessary} to infer parametric convergence rates for the empirical Wasserstein barycenter. In particular, 
this degenerate behavior cannot be overcome by whatever estimator in the regime $\beta -\alpha> 2$ without imposing additional assumptions on the underlying measures. 

It remains open to investigate whether the analysis by \cite{le2022fast} is optimal or whether it can be extended to the regime  $1\leq \beta-\alpha < 2$. In the introduction of their paper, they state that the bounds for the support of probability measures on spheres $\SSS^m$ amounts to an open ball of radius $\frac{\pi}{4}$, which is  by a factor of $2$ short of the bound of $\frac{\pi}{2}$ given by \cite{afsari2011riemannian}. For this reason, we conjecture that their result extends to the regime $1\leq \beta-\alpha < 2$.

\begin{conjecture}
With the notation of Theorem \ref{prop:barycentereConvergence},
 \begin{align*} 
    \EE[ \WC^2(\emean, \pmean)] \leq 
    \frac{C}{n}\, \frac{ \EE_{\mu \sim \pms}[\WC^2(\mu, \pmean)] }{(\beta - \alpha-2)^2},
\end{align*} for all $1\leq \beta-\alpha < 2$ with a constant $C>0$
that does not depend on $\pms$. 
\end{conjecture}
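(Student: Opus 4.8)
The plan is to sharpen the variance inequality that powers Theorem~\ref{prop:barycentereConvergence}. Concretely, under the hypotheses of that theorem one wants to show that the population Fr\'echet functional $F_\pms(b)=\EE_{\mu\sim\pms}[\WC^2(b,\mu)]$ obeys a quadratic-growth bound around its minimizer,
\[
 F_\pms(b)-F_\pms(\pmean)\;\ge\; K\,\WC^2(b,\pmean)\qquad\text{for every }b\in\PC_2(\RR^m),
\]
with $K=K(\alpha,\beta)\ge c\,(2-(\beta-\alpha))$ for a universal $c>0$. The argument in \cite{le2022fast} only yields $K\gtrsim 1-(\beta-\alpha)$, which is positive on the smaller range $\beta-\alpha<1$; the conjecture is precisely the assertion that the sharp quadratic-growth constant of the barycenter functional, under the $\alpha$-strongly-convex/$\beta$-smooth potentials hypothesis, scales like $2-(\beta-\alpha)$.

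Everything downstream of such a $K$ is the routine $M$-estimation argument of \cite[Corollary~16]{le2022fast}. By optimality of the empirical barycenter, $F_n(\emean)\le F_n(\pmean)$, hence $K\,\WC^2(\emean,\pmean)\le F_\pms(\emean)-F_\pms(\pmean)\le (F_\pms-F_n)(\emean)-(F_\pms-F_n)(\pmean)$; the right-hand side is a centered average of the increments $g_\mu(b):=\WC^2(b,\mu)-\WC^2(\pmean,\mu)$, whose first variation at $b$ is (twice) the optimal displacement field from $b$ to $\mu$, of squared $L^2(b)$-norm at most $4\WC^2(b,\mu)\le 8\WC^2(b,\pmean)+8\WC^2(\pmean,\mu)$. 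A one-step localization (peeling) computation then yields $\EE[\WC^2(\emean,\pmean)]\le \tfrac{C'}{K^2 n}\,\EE_{\mu\sim\pms}[\WC^2(\mu,\pmean)]$, and substituting $K\ge c(2-(\beta-\alpha))$ produces the conjectured bound with $C$ universal.

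The crux is the variance inequality, which I would prove along the Wasserstein geodesic from $\pmean$ to $b$. Write $b=(T)_\#\pmean$ with $T$ the Brenier map, $\gamma_t=((1-t)\Id+tT)_\#\pmean$, and for each support point $\mu=(\nabla\varphi_\mu)_\#\pmean$ note that $\nabla\varphi_\mu$ is the optimal map $\pmean\to\mu$. The mechanism is that $\WC^2(\cdot,\mu)$ is $2$-convex along the generalized geodesic based at $\mu$, while $\gamma$ is ``twisted'' relative to that curve, the twist being governed by how far $\nabla\varphi_\mu$ is from a dilation of the identity along transport rays. The observation permitting the factor-of-two gain is that for each fixed $\mu$ this is a \emph{per-support-point} quantity, $\inf_{\lambda\ge 0}\|D^2\varphi_\mu-\lambda\,\Id\|_{\mathrm{op}}$, which is at most $\tfrac{\beta-\alpha}{2}$ (attained at $\lambda=\tfrac{\alpha+\beta}{2}$) rather than the cruder $\beta-\alpha$ bound used in \cite{le2022fast}; combined with the first-order optimality of the barycenter (the $\pmean$-a.e.\ identity $\EE_\mu[\nabla\varphi_\mu]=\Id$, which annihilates the linear-in-$t$ contribution once one averages over $\mu$) this should give $\tfrac{d^2}{dt^2}\,F_\pms(\gamma_t)\ge 2c\,(2-(\beta-\alpha))\,\WC^2(\pmean,b)$ in the appropriate distributional sense, and integrating this together with $F_\pms(\gamma_t)\ge F_\pms(\pmean)$ yields the inequality. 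Concretely, I would Taylor-expand the Kantorovich potentials between $\gamma_t$ and $\mu$ to second order, use the Monge--Amp\`ere/change-of-variables structure of $\nabla\varphi_\mu$ relative to $\pmean$ to express the curvature correction, and bound it by $\tfrac{\beta-\alpha}{2}$ after subtracting the optimal scalar shift $\lambda_\mu$.

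The main obstacle is making this refined second-order estimate rigorous. The per-$\mu$ shift $\lambda_\mu$ cannot literally be implemented as a reparametrization of the geodesic (the maps $\nabla\varphi_\mu$ need not commute and each $\lambda_\mu$ differs), so one must control the \emph{average over $\mu$} of the twisted Hessians directly, which is exactly where a genuinely two-sided use of the $\alpha$-convexity and $\beta$-smoothness bounds is required in place of the one-sided estimate currently available. It is not excluded that the honest constant is only $1-(\beta-\alpha)$ --- the statement is, after all, a conjecture --- but the analogy with the $\pi/2$ Afsari radius \cite{afsari2011riemannian} for Riemannian barycenters (versus the $\pi/4$ radius obtained by the same method in \cite{le2022fast}) makes the improvement plausible. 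A sensible intermediate milestone is the affine case, where every $\nabla\varphi_\mu$ is linear (Gaussian-type marginals): there all Wasserstein quantities reduce to explicit matrix identities, the optimal shift $\lambda_\mu$ is a genuine spectral centering, and the factor $2$ can be verified by direct linear algebra; bootstrapping to general marginals by localizing near $\pmean$ would be the next step.
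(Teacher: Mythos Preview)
The statement is labeled a \emph{Conjecture} in the paper and the paper provides no proof; it is presented explicitly as an open problem (``It remains open to investigate whether the analysis by \cite{le2022fast} is optimal or whether it can be extended to the regime $1\le\beta-\alpha<2$''). There is therefore no paper proof to compare against.

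Your proposal is not a proof but a research outline, and you yourself flag the crux correctly. The reduction to a sharpened variance inequality $F_\pms(b)-F_\pms(\pmean)\ge K\,\WC^2(b,\pmean)$ with $K\asymp 2-(\beta-\alpha)$ is the right target, and the downstream $M$-estimation step would indeed go through mechanically once such a $K$ is in hand. The heuristic for the factor-of-two gain --- replacing the crude operator bound $\beta-\alpha$ by the centered bound $\inf_\lambda\|D^2\varphi_\mu-\lambda\Id\|_{\mathrm{op}}\le(\beta-\alpha)/2$ --- is plausible and matches the $\pi/4$ versus $\pi/2$ analogy the paper itself invokes. But the obstacle you identify is genuine and unresolved: the optimal shifts $\lambda_\mu$ vary with $\mu$, the maps $\nabla\varphi_\mu$ do not commute, and there is no single reparametrization of the geodesic that realizes all shifts simultaneously. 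Controlling the $\mu$-average of the twisted second variation without losing the centering is exactly the missing idea, and nothing in your sketch explains how to do it beyond the commutative (affine/Gaussian) case. So this is a reasonable plan of attack on an open problem, with the hard step clearly isolated but not solved; it should not be read as a proof, and the paper makes no claim that one exists.
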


In addition, it would be worthwhile to explore sharp convergence rates for the empirical Wasserstein barycenter beyond this specific range. A notable contribution in this context was made by \citet[Theorem 4.6.1]{kroshnin2021inside} who provides an example where $\pms$ is concentrated on the collection of finitely supported probability measures and the empirical barycenters achieves a logarithmic convergence rate at best.

\section{Discussion}\label{scn:discussion}

This work sheds light on the statistical challenges involving (generalized) Fr\'echet means nearby the regime of non-uniqueness, demonstrating that estimating Fr\'echet means with high accuracy is not possible (\Cref{thm:minimaxLower}) and that all testing procedures will experience large errors of first or second kind (\Cref{rmk:testing}). For practical contexts this raises two points. Firstly, how to identify the presence of non-uniqueness effects, and secondly, how to proceed when such effects cannot be dismissed.

When the objective is to determine a location descriptor (a Fr\'echet $\rho$-mean for given~$\rho$) for a data set, nearby the regime of non-unique means, it is reasonable to anticipate that the distribution of an appropriate estimator (such as the sample mean) based on i.i.d.\ observations is concentrated around the respective population means. As the sample size increases and assuming the estimator is consistent, this distribution tends to become multimodal, a characteristic that can be assessed using the test by \cite{eltzner2020testing}. If this test does not reject nonuniqueness, a single location descriptor likely is not sufficient to suitably describe the data set. Instead, it is reasonable to perform cluster analysis to partition the data into structured sub-samples. For each sub-sample the Fr\'echet mean is then to be calculated. These Fr\'echet means then provide a more accurate representation of location descriptors. 

Moreover, when employing Fr\'echet means for testing purposes between datasets, unless non-uniqueness can be ruled out, we suggest conducting a sample splitting procedure to assess if the respective test maintains the correct significance level between split samples from both datasets. If the effective level strongly deviates from the nominal level, then, either recalibrate, for instance by bootstrapping, or consider an alternative testing method. The latter, however, amounts to multiple testing bias, warranting appropriate correction methods. We leave a comprehensive analysis of such procedures as future research.

\begin{ackno}
    S.\,Hundrieser gratefully acknowledges support by the Deutsche Forschungsgemeinschaft (DFG, German Research Foundation) as part of DFG RTG 2088, B.~Eltzner and S.F.\,Huckemann gratefully acknowledge funding by the DFG SFB 1456. The latter also acknowledges support by DFG HU 1575/7.
    \end{ackno}


\appendix

\section{Deferred Proofs}

\begin{proof}[Proof of \Cref{prop:CLT_Circle}]
    Assertion $(i)$ has been shown by \cite{mckilliam2012direction,hotz2015intrinsic}. For Assertion $(ii)$,  expand the empirical Fr\'echet function based on the arc length distance for $0<x<\delta$, 
    \begin{align*}
     F_n(x) &= \frac{1}{n}\sum_{X_j \in [x -\pi,\pi)} (X_j -x)^2 + \frac{1}{n}\sum_{X_j <x-\pi} (X_j+2\pi - x)^2 \\
           &= \frac{1}{n}\sum_{j=1}^n(X_j - x)^2 +  \frac{4\pi}{n} \sum_{X_j < x-\pi} (X_j - x + \pi)
    \end{align*}
    In consequence, with the Euclidean mean $\overline{X}_n=\frac{1}{n}\sum_{j=1} X_j$, almost surely,
    \begin{align}\label{eq:proof-clt}
     \frac{1}{2}\,\grad\, F_n(x)  &= x-\overline{X}_n -  \frac{2\pi}{n} \sum_{X_j < x-\pi} 1\,.
   \end{align}
    Further, upon rewriting the expectation,  
    \begin{align*}
    \EE\left[\frac{1}{n} \sum_{X_j < x-\pi} 1\right] &= \int_{-\pi}^{x-\pi} f(t)\,dt~=~
     \int_{-\pi}^{x-\pi} \left(\frac{1}{2\pi} - G'(\pi +t) + o(G'(\pi+t))\right) \,dt\\
     &= \frac{x}{2\pi} - G(x) + o\big(G(x)x\big)\,,
    \end{align*}
    where the last equality is due to the properties of $G$, we infer for the sum of Bernoulli variables
    \begin{align*}
    \frac{1}{n} \sum_{j=1}^n \Indicator{\{X_j < x-\pi\}} &= \frac{x}{2\pi} - G(x) + o\big(G(x)x\big) + O_p\left(\frac{\sqrt{|x-G(x)|}}{\sqrt{n}}\right)\,.
    \end{align*}
    In conjunction with the analogue argument for $-\delta < x < 0$ we thus obtain from (\ref{eq:proof-clt}) that
    \begin{align}\label{proof1:prop:CLT_Circle}
    \frac{\sqrt{n}}{2}\, \grad\, F_n(x)  &= \sqrt{n} (\sign(x)\,2\pi\,G(|x|) -\overline{X}_n)  + o\big(G(x)x\big) + O_p\left(\frac{\sqrt{|x+G(|x|)|}}{\sqrt{n}}\right)\,.
   \end{align}
    This yields Assertion $(ii)$ since the above left-hand side\ vanishes by definition at $x=\emean$, it holds $\emean = o_p(1)$ due to \cite{ziezold1977expected}, and by the classical Euclidean central limit theorem which asserts $\sqrt{n}\,\overline{X}_n  \xrightarrow{\mathcal{D}} \mathcal{N}\left(0,\sigma^2 \right)$.
   \end{proof}

\begin{proof}[Proof of \Cref{cor:log-smeary}]
    Since the density of $\pms_{\textup{log},r}$ on $(-\pi, -\pi+1/2)\cup (\pi - 1/2, \pi)$ is strictly smaller than $\frac{1}{2\pi}$ we see that $c_r>\frac{1}{2\pi}$. Hence, by \citet[Propostion 1]{hotz2015intrinsic} there is a unique Fr\'echet population mean $\pms$ in $[-1/2,1/2]$, which, by symmetry, is $\mf_I= \{0\}$. This implies the preliminary claim. Moreover, the strong law by \cite{ziezold1977expected} yields $(i)$.

    Next, plug in $x=\emean$ in (\ref{proof1:prop:CLT_Circle}) as done in the proof of Theorem \ref{prop:CLT_Circle}, to obtain
    $$ \sqrt{n}\cdot \text{sign}(\emean )\exp(-1/|\emean |^r)  = \frac{\sqrt{n}}{2\pi}\,\overline{X}_n +o_P(1)\,.$$
    Subjecting both sides above to $y\mapsto \sign{(y)}\cdot \log(|y|) \,\Indicator{\RR\setminus\{0\}}(y)$ gives
    \begin{align*} \text{sign}(\emean ) \left(\log \sqrt{n} - \frac{1}{|\emean|^r}\right) &= \text{sign}(\overline{X}_n) \log\left| \frac{\sqrt{n}}{2\pi}\,\overline{X}_n + o_P(1)\right|\\
    &= \text{sign}(\overline{X}_n) \log\left| \frac{\sqrt{n}}{2\pi}\,\overline{X}_n\right| + o_P(1)\,, 
    \end{align*}
    where for the second equality we used that  $\frac{2\pi}{\sqrt{n}\,\overline{X}_n} = O_P(1)$ and $\log\big(|1+ o_P(1)|\big) = o_P(1)$. Setting $L_n = \text{sign}(\overline{X}_n) \log\left| \frac{\sqrt{n}}{2\pi}\,\overline{X}_n\right|$, we proceed to 
    \begin{align*} \text{sign}(\emean ) \,\log \sqrt{n} - L_n &=  \frac{\text{sign}(\emean ) + o_p(|\widehat\pms|^r)}{|\emean|^r}\,.
    \end{align*}
    This yields whenever $|L_n| < \log \sqrt{n}$, and the probability for this event tends to $1$, that
    \begin{align*}  \text{sign}(\emean)\,|\emean|^r &=  \frac{1 + o_p(|\emean|^r)}{\text{sign}(\emean ) \,\log \sqrt{n} - L_n}
    ~=~
    \frac{1 + o_p(|\emean|^r)}{\text{sign}(\emean )\,\log\sqrt{n} }\, \sum_{k=0}^\infty \left(\frac{\text{sign}(\emean )\,L_n}{\log\sqrt{n}}\right)^k\,.
    \end{align*}
    Hence, since $\emean = o_P(1)$ due to $(i)$ we conclude
    \begin{equation*}
         |\emean|^r	= \frac{1}{\log\sqrt{n}} +  \frac{\sign(\emean)\,L_{n}}{(\log\sqrt{n})^2} + o_P\left(\frac{1}{(\log\sqrt{n})^2}\right)\,.
    \end{equation*}
    By symmetry of $\pms_{\textup{log},r}$, this yields Assertion $(ii)$. It further implies 
    \begin{align*}
        \emean &= \sign(\emean)\left(\frac{1}{\log\sqrt{n}} +  \frac{\sign(\emean)\,L_{n}}{(\log\sqrt{n})^2} + o_P\left(\frac{1}{(\log\sqrt{n})^2}\right)\right)^{\frac{1}{r}}\\
        &=\frac{\sign(\emean)}{(\log\sqrt{n})^\frac{1}{r}} \left(1 + \frac{1}{r}\, \frac{\sign(\emean)\,L_{n}}{\log\sqrt{n}} + o_P\left(\frac{1}{\log\sqrt{n}}\right)\right)  \,.
    \end{align*}
    With the definition of $L_n$ above, we thus obtain Assertion $(iii)$.
\end{proof}

\begin{proof}[Proof of \Cref{prop:Procrustes}]
With the notation of  \Cref{prop:Procrustes},
let $\hat v_1 := {\rm vec}(\psi_{\xi}\circ \bar P_n)$, $X_i : = {\rm vec}(\psi_{\xi_n}\circ Z_i)$, $i=1,\ldots,n$ and $X:= {\rm vec}(\psi_\xi\circ Z)$. Further, let $D_n:=\frac{1}{n}\sum_{i=1}^n X_iX_i^T = \sum_{j=1}^{N+1} \hat \lambda_j \hat v_j \hat v_j^T$, with $\hat\lambda_1\geq \ldots \geq \hat\lambda_{N+1} $ and $\hat v_1,\ldots,\hat v_{N+1}$ such that $v_1^T\hat v_1 \geq 0$. In fact, by the strong law of \cite[Theorem A.3]{huckemann2011intrinsic} we may assume that $v_1^T\hat v_1 > 0$ for $n$ sufficiently large a.s. and thus $\hat y := \phi_{v_1}(\hat v_1) \in T_{v_1}\SSS^N$ is well defined. 
Since the empirical Fr\'echet $d^2_R$-function, in the local coordinate $y \in T_{v_1}\SSS^N$ is given by 
\begin{align*} F_n\left({\rm vec}^{-1}\circ \phi_{v_1}^{-1}(y)\right)  =& 1 - \frac{(y+v_1)^T}{\|y+v_1\|}D_n \frac{y+v_1}{\|y+v_1\|}~=:~G_n(y)\,,
\end{align*}   
we have, a.s. for $n$ sufficiently large and suitable $\widetilde{y}$ between $\hat y$ and $0$ that
\begin{align*} 
0 =& \grad G_n(\hat y) = \grad G_n(0) + {\rm Hess} G_n(\widetilde y) \hat y\,.
\end{align*}
As shown in \cite{HE_MFO_2019}, the central limit theorem of \cite{eltzer2019_smearyCLT} is applicable, yielding the distributional limit
$$ \sqrt{n}\hat y \to {\mathcal N}\big(0, ({\rm Hess}_y G(0))^{-} D ({\rm Hess}_y G(0))^{-}\big)\,,$$
where $D= \EE[XX^T]$, $G(y) = 1 -  \frac{(y+v_1)^T}{\|y+v_1\|}D\frac{y+v_1}{\|y+v_1\|}$ is the population Fr\'echet function in the local coordinate and $({\rm Hess}_y G(0))^{-}$ is any generalized inverse of ${\rm Hess}_y G(0)$ (justified below). Since 
\begin{align*} 
-\frac{1}{2}\grad_y G(y) =& D\frac{v_1+y}{\|y+v_1\|^2} - \frac{(y+v_1)^T D (y+v_1)}{\|y+v_1\|^4}(y+v_1)\\
-\frac{1}{2}{\rm Hess}_y G(y) =& \frac{D}{\|y+v_1\|^2} - 2\frac{ D (y+v_1)(y+v_1)^T + (y+v_1)(y+v_1)^T D}{\|y+v_1\|^4} \\ & + 4 \frac{(y+v_1)^T D (y+v_1)}{\|y+v_1\|^6} (y+v_1)(y+v_1)^T - \frac{(y+v_1)^T D (y+v_1)\Id_{m+1}}{\|y+v_1\|^4}
\end{align*}   
we have thus
$$ {\rm Hess}_yG(0) = 2\sum_{j=2}^{N+1}(\lambda_1 - \lambda_j) v_jv_j^T\,.$$
Since, moreover a.s., for $n$ sufficiently large, $\hat y$ is in the tangent space spanned by $v_2,\ldots,v_{N+1}$, any generalized inverse can be taken, yielding the assertion.
\end{proof}

\begin{proof}[Proof of \Cref{lem:lowerbound_difference}]
    We prove Assertions $(i)$ and $(ii)$ by invoking \citet[Theorem 4 and Corollary 1]{taylor2017smooth}, which states that a function $\tilde \varphi \colon \{E, -E\} \to \RR$ can be extended to an $\alpha$-strongly convex and $\beta$-smooth function $\varphi \colon \RR^2 \to \RR$ for $\beta > \alpha \geq 0$ and such that $\nabla \varphi(E) = A$ and $\nabla \varphi(-E) = -A$ if and only if 
    \begin{align*}
        \tilde \varphi(E) - \tilde \varphi(-E) &- (-A)^t(E +E) \\ 
        \geq \;& \frac{1}{2(1-\alpha/\beta)}\left(\frac{1}{\beta} \norm{2A}^2 + \alpha \norm{2 E}^2 - 2 \frac{\alpha}{\beta}(-2A)^t(-2E) \right), \text{ and } \\
        \varphi(-E) - \varphi(E) &- (A)^t(-E -E) \\ 
        \geq \;& \frac{1}{2(1-\alpha/\beta)}\left(\frac{1}{\beta} \norm{2A}^2 + \alpha \norm{2 E}^2 - 2 \frac{\alpha}{\beta}(2A)^t(2E) \right).
    \end{align*}
Since the right-hand sides are  identical for both inequalities, we may assume without loss of generality that $  \tilde \varphi(E) - \tilde \varphi(-E)\eqqcolon \kappa \leq 0$ and only continue with the first inequality. Computing the terms on both sides then yields that
\begin{align*}
  1 \geq   \kappa  +1 \geq \;& \frac{\beta }{2(\beta - \alpha)}\left(\frac{1}{\beta} \norm{2A}^2 + \alpha \norm{2 E}^2 - 2 \frac{\alpha}{\beta}(-2A)^t(-2E) \right)\\
    =\; & \frac{1}{2(\beta - \alpha)}\left(\norm{2A}^2 + \alpha\beta \norm{2 E}^2 - 2 \alpha(-2A)^t(-2E) \right)\\
    =\; &  \frac{1}{2(\beta - \alpha)} \left(4 + 2\alpha \beta -4\alpha  \right)
    = \frac{1}{2(\beta - \alpha)} \left( 4 + 2\alpha (\beta -2)\right).
\end{align*}
Performing a change of variables $\gamma = \beta - \alpha \geq 0$ thus yields the following inequality 
\begin{align*}
    \gamma \geq 2 + (\beta - \gamma)(\beta -2)
\end{align*}
Solving this inequality via the quadratic formula, we obtain the set of solutions 
\begin{align}\label{eq:GammaBeta}
   \left\{(\gamma, \beta)\in \RR_{\geq 0}^2 \;\bigg|\; \frac{1}{2} \left(-\sqrt{\gamma^2 - 4} + \gamma + 2\right)\leq \beta\leq \frac{1}{2} \left(\sqrt{\gamma^2 - 4} + \gamma + 2\right),  \gamma\geq 2\right\}.
\end{align}
The minimal value for $\gamma$ is given by $2$, and it leads to $\beta = 2$, which implies that $\alpha =0$. Consequently, if $\alpha>0$, then $\beta-\alpha=\gamma>2$. 

Meanwhile, imposing the assumption $\nabla \varphi(E) = -A$ and $\nabla \varphi(-E) = A$ would lead to 
 \begin{align*}
       \tilde  \varphi(E) - \tilde \varphi(-E) &- (A)^t(E +E) \\ 
        \geq \;& \frac{1}{2(1-\alpha/\beta)}\left(\frac{1}{\beta} \norm{-2A}^2 + \alpha \norm{2 E}^2 - 2 \frac{\alpha}{\beta}(2A)^t(-2E) \right),
\end{align*}
which is equivalent to 
\begin{align*}
  -1 \geq   \kappa  -1 \geq\frac{1}{2(\beta - \alpha)} \left(4 + 2\alpha \beta +4\alpha  \right),
\end{align*}
and thus has no solutions for $\beta > \alpha \geq 0$. From these considerations we infer Assertion~$(i)$. 

To show Assertion $(ii)$ we first choose $\gamma= 2+ \max(\delta,1/2)$. Then, infer by \eqref{eq:GammaBeta} there exists an $\alpha$-strongly convex and $\beta$-smooth function  $\varphi^\mu\colon \RR^m \to \RR$ such that $\varphi^\mu(E) = \varphi^\mu(-E)$ with $\beta =  (-\sqrt{\gamma^2 - 4} + \gamma + 2)/2\leq 3$ and $\alpha = \beta - \gamma>0$, and which fulfills $\mu = \nabla \varphi^\mu_{\#}[\frac{1}{2}(\delta_{E} + \delta_{-E})]$.
Likewise, since $A^t E = B^t E$, there also exists an $\alpha$-strongly convex and $\beta$-smooth function $\varphi^\nu\colon \RR^m \to \RR$ which fulfills $\nu = \nabla \varphi^\nu_{\#}[\frac{1}{2}(\delta_{E} + \delta_{-E})]$. Finally, for the remaining case,  we consider the function  $\varphi^I\colon \RR^m \to \RR, x\mapsto \frac{1}{2}\norm{x}^2$ which satisfies that $\frac{1}{2}(\delta_{E} + \delta_{-E}) = \nabla \varphi^I_{\#}[\frac{1}{2}(\delta_{E} + \delta_{-E})]$. In particular, $\varphi^I$ is $1$-strongly convex and $1$-smooth, and since $\beta\leq 3$ and $\alpha< 1$, the validity of Assertion $(ii)$ follows. 

Finally, Assertion $(iii)$ follows from \Cref{cor:effects_FSS} after  noting that $$\diam{\mf_W(\pms)}^2 = \WC^2\left(\frac{1}{2}(\delta_{E} + \delta_{-E}), \frac{1}{2}(\delta_{F} + \delta_{-F})\right)= 1$$ since $\norm{E - F}^2 = \norm{E + F}^2 = 1$. 
\end{proof}

\end{document}